\newtheorem{thm}{Theorem}
\newtheorem{cor}[thm]{Corollary}
\newtheorem{prop}[thm]{Proposition}
\theoremstyle{definition}
\newtheorem{defn}[thm]{Definition}
\newtheorem{say}[thm]{}
\newtheorem{exmp}[thm]{Example}
\newtheorem{rem}[thm]{Remark}          
\newtheorem{ack}{Acknowledgments}
\newtheorem{defn-thm}[thm]{Definition--Theorem}  
\newtheorem{defn-lem}[thm]{Definition--Lemma}  
\theoremstyle{remark}
\renewcommand{\c}[0]{{\mathbb C}}
\newcommand{\z}[0]{{\mathbb Z}}
\renewcommand{\r}[0]{{\mathbb R}}
\newcommand{\p}[0]{{\mathbb P}}
\newcommand{\map}[0]{\dasharrow}
\newcommand{\qtq}[1]{\quad\mbox{#1}\quad}
\newcommand{\pic}[0]{\operatorname{Pic}}
\newcommand{\aut}[0]{\operatorname{Aut}}
\newcommand{\diff}[0]{\operatorname{Diff}}
\newcommand{\bir}[0]{\operatorname{Bir}}
\newcommand{\onto}[0]{\twoheadrightarrow}
\begin{document}
\bibliographystyle{amsalpha}

\title[Cremona transformations  and 
 diffeomorphisms of  surfaces]{Cremona transformations \\ and 
\\ diffeomorphisms of  surfaces}
\author{J\'anos Koll\'ar \and Fr\'ed\'eric Mangolte}
\address{Princeton University, Princeton NJ 08544-1000}
\email{kollar@math.princeton.edu}
\address{Universit\'e de Savoie,  73376 Le Bourget du Lac Cedex, France.}
\email{mangolte@univ-savoie.fr}

\today

\maketitle

The simplest {\it Cremona transformation} of projective 3-space
is the involution
$$
\sigma \colon (x_0:x_1:x_2:x_3)\mapsto 
\Bigl(\frac1{x_0}:\frac1{x_1}:\frac1{x_2}:\frac1{x_3}\Bigr),
$$
which is a diffeomorphism outside the  tetrahedron $(x_0x_1x_2x_3=0)$.
More generally, if $L_i:=\sum_j a_{ji}x_j$ are linear forms defining
the faces of a tetrahedron, we get
the Cremona transformation
$$
\sigma_{\mathbf L} \colon (x_0:x_1:x_2:x_3)\mapsto 
\Bigl(\frac1{L_0}:\frac1{L_1}:\frac1{L_2}:\frac1{L_3}\Bigr)
\cdot \bigl(a_{ij}\bigr)^{-1},
$$
which is a diffeomorphism  outside the  tetrahedron $(L_0L_1L_2L_3=0)$.
 The vertices of the tetrahedron are called the {\it base points}.
If $Q$ is a quadric surface in $\p^3$, its image under a
Cremona transformation is, in general, a sextic surface.
However, if $Q$ passes through the 4 base points, then
 its image  $\sigma_{\mathbf L}(Q)$ is again a quadric surface in $\p^3$
passing through the 4 base points. In many cases,
we can view $\sigma_{\mathbf L}$ as a map of
$Q$ to itself.

The aim of this paper
is to show that these  Cremona transformations  
generate both the group of automorphisms and the group of
 diffeomorphisms of the sphere, the torus and of 
all non-orientable surfaces. 

Let us start with the sphere
$S^2:=(x^2+y^2+z^2=1)\subset \r^3$ and view this as 
the set of real points of the 
quadric $Q:=(x^2+y^2+z^2=t^2)\subset \p^3$ in projective 3-space.
Pick 2 conjugate point pairs $p,\bar p, q,\bar q$ on the complex
quadric $Q(\c)$ and let $\sigma_{p,q}$ denote the
Cremona transformation with base points $p,\bar p, q,\bar q$.
As noted above, $\sigma_{p,q}(Q)$ is another quadric surface. 
 The faces of the  tetrahedron determined
by these 4 points are disjoint from
$S^2$, hence  $\sigma_{p,q}$ is a diffeomorphism from $S^2$
to the real part of $\sigma_{p,q}(Q)$.
Thus $Q$ and $\sigma_{p,q}(Q)$ are projectively equivalent and
the corresponding Cremona transformation
$\sigma_{p,q}$ can be viewed as a diffeomorphism of $S^2$ to itself, 
 well defined up to left and right multiplication by $O(3,1)$.
It is also convenient to allow the points $p,q$ to coincide;
see (\ref{cubic.inv.exmp}) for a precise definition.
Let us call these the Cremona transformations
with  {\it imaginary base points}.
Our first result is that, algebraically, these generate
the automorphism group.

\begin{thm}\label{i.S2.thm} The  Cremona transformations with
imaginary base points
$\sigma_{p,q}$ and $O(3,1)$ generate 
$\aut(S^2)$.
\end{thm}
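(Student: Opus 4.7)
The plan is to establish Theorem~\ref{i.S2.thm} by a Noether--Fano style induction on the bi-degree of $\phi \in \aut(S^2)$, reducing the degree at each step by composing with a Cremona transformation $\sigma_{p,q}$ having imaginary base points.

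First I would observe that any $\phi \in \aut(S^2)$ extends canonically to a real birational self-map of $Q$, and since $\phi$ is a diffeomorphism on the real locus, no base point of $\phi$ lies on $S^2$. Thus all base points come in complex-conjugate pairs $\{p_i, \bar p_i\}$ with equal multiplicities. Base changing to $\c$, one has $Q \otimes \c \cong \p^1 \times \p^1$, and $\phi^*(1,1) \equiv (a,b)$; since the real structure on $Q$ swaps the two rulings, $a=b$, and I set $d(\phi) := a$. When $d(\phi)=1$, the map $\phi$ preserves the hyperplane class of $Q \hookrightarrow \p^3$, hence comes from a linear automorphism of $\p^3$ preserving $Q$, i.e.\ from $O(3,1)$; this handles the base case.

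For $d := d(\phi) > 1$, I resolve $\phi$ by blow-ups $\pi:X \to Q$ centered at the conjugate pairs, so that
\[
\pi^*\phi^*(1,1) \equiv d\,(1,1) - \tsum_i m_i E_i,
\]
with the exceptional divisors and their multiplicities coming in conjugate pairs; let $\mu_1,\ldots,\mu_k$ denote the common multiplicities of the pairs. Computing the self-intersection and arithmetic genus of the strict transform of a generic $(1,1)$-curve (which must map to a smooth $(1,1)$-curve of self-intersection $2$ and arithmetic genus $0$) yields the identities
\[
\tsum_j \mu_j = 2(d-1), \qquad \tsum_j \mu_j^2 = (d-1)(d+1).
\]
From $\mu_j^2 \le \mu_{\max} \mu_j$ summed over $j$, I get $\mu_{\max} \ge (d+1)/2 > d/2$; the diophantine case $k=1$ forces $4(d-1)=d+1$, which has no integer solution, so $k \ge 2$. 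Thus I can choose a pair $\{p,\bar p\}$ realizing the maximum together with any other conjugate pair $\{q,\bar q\}$ in the base scheme, and form the Cremona $\sigma_{p,q}$ with these four imaginary base points. A direct multiplicity computation (using that $\sigma_{p,q}$ is an involution on $Q$ defined by a $(3,3)$-system with four double base points) then gives
\[
d(\phi \circ \sigma_{p,q}) \ =\ 3d - 4(\mu_p + \mu_q) \ <\ d,
\]
since $\mu_p > d/2$. By induction, $\phi$ lies in the subgroup generated by $O(3,1)$ and the $\sigma_{p,q}$'s.

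The main obstacle will be the bookkeeping for higher-order or infinitely near base points, so that the Noether--Fano identities hold verbatim; this is standard but requires care in the real setting, where one must track conjugation-equivariance of all the blow-ups. A secondary issue is the degenerate case in which the two conjugate pairs $p$ and $q$ coalesce, which should be handled by the cubic involution alluded to just before the statement of the theorem. Finally, one must verify that $\phi \circ \sigma_{p,q}$ remains regular on $S^2$, but this is automatic: both factors are diffeomorphisms of $S^2$, and a real birational self-map of $Q$ whose complex base locus is disjoint from $S^2$ is regular there.
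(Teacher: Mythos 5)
Your overall strategy is the same as the paper's (a Noether--Fano descent on the degree of the homaloidal system, composing with cubic involutions centered at conjugate pairs), and your two identities $\sum_j\mu_j=2(d-1)$, $\sum_j\mu_j^2=(d-1)(d+1)$ and the conclusion $\mu_{\max}\geq (d+1)/2$ are correct. The genuine gap is in the choice of the \emph{second} pair. First, the degree formula should read $d(\phi\circ\sigma_{p,q})=3d-2(\mu_p+\mu_q)$, not $3d-4(\mu_p+\mu_q)$; so the degree drops if and only if $\mu_p+\mu_q>d$, which does \emph{not} follow from $\mu_p>d/2$ alone. Taking the maximal pair together with an arbitrary other pair can fail: the vector $(\mu_1,\dots,\mu_5)=(6,3,1,1,1)$ satisfies both identities with $d=7$, and pairing $\mu_1=6$ with a pair of multiplicity $1$ gives $\mu_p+\mu_q=7=d$, so the degree is unchanged. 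Producing an admissible second pair is precisely the hard part of the paper's proof: either there are two conjugate pairs each of multiplicity $>d/2$, or one writes $\mu_{\max}=d/2+a$ and observes that the failure of every other pair to work means all remaining multiplicities are $\leq d/2-a$; feeding this into the two identities yields $(a+2)(d-4)\leq -6$, hence $d\leq 3$, and the residual cases $d=2,3$ are excluded by separate ad hoc arguments. None of this appears in your proposal, and without it the induction does not close.

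Two further points you wave at but do not resolve, and which the paper does address. One must check that the four chosen points are in general position for $\sigma_{p,q}$ to be the standard cubic involution (no two on a line of $Q(\c)$, not all four on a conic); the paper gets this from $\Gamma\cdot L=d$, $\Gamma\cdot C=2d$ and the absence of real lines on $S^2$. And the infinitely near case genuinely occurs and is not a "secondary issue": it requires the degenerate involutions $\sigma_{abcdef}$ of (\ref{cubic.inv.exmp}) and their own blow-up/contraction computation, which the paper carries out. Your final observation that regularity of the composite on $S^2$ is automatic is correct.
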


Most diffeomorphisms of $S^2$ are not algebraic, so the best
one can hope for is that these Cremona transformations
generate  $\diff(S^2)$ as topological group.
Equivalently, that $\aut(S^2)$ is a dense
subgroup of $\diff(S^2)$.
The results of \cite{luk-sib, luk-zam},
pointed out to us by  M.\ Zaidenberg, imply that 
 the group of  algebraic automorphisms is
 dense in the group of diffeomorphisms
for the sphere and the torus.
His methods, reviewed in (\ref{luk.say}),
use the $SO(3,\r)$ action on the sphere
and the torus action on itself.

In order to go futher, first we need to deal with
diffeomorphisms with fixed points.
Building on \cite{bh}, it is proved in \cite{hm1}  that $\aut(S^2)$
is $n$-transitive for any $n\geq 1$.
 Using this, it is easy to see (\ref{pf.of.i.S2n.thm})
that   the  density property also holds with
assigned fixed points.

\begin{cor}\label{i.S2n.thm} 
$\aut(S^2, p_1,\dots, p_n)$ is dense in $ \diff(S^2, p_1,\dots, p_n)$
for any finite set of distinct points
$p_1,\dots, p_n\in S^2$, 
where $\aut(\ )$ denotes
the group of algebraic
automorphisms of $S^2$ 
 fixing $p_1,\dots, p_n$
and  $ \diff(\ )$ the  group of  diffeomorphisms  fixing $p_1,\dots, p_n$.
\end{cor}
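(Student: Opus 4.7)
The plan is to combine the density $\overline{\aut(S^2)}=\diff(S^2)$ coming from \cite{luk-sib,luk-zam} as reviewed in (\ref{luk.say}) with the algebraic $n$-transitivity of $\aut(S^2)$ established in \cite{hm1}, by means of a standard correction argument. Fix $\phi\in\diff(S^2,p_1,\dots,p_n)$ together with a basic neighborhood of $\phi$ in $\diff(S^2,p_1,\dots,p_n)$; we may take this to be the trace of a neighborhood $V$ of $\phi$ in $\diff(S^2)$. First I would use density to choose $\psi\in \aut(S^2)$ very close to $\phi$ in $\diff(S^2)$; since evaluation at each $p_i$ is continuous, $q_i:=\psi(p_i)$ is close to $p_i=\phi(p_i)$, although in general $q_i\neq p_i$.

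The heart of the argument is to produce $\tau\in\aut(S^2)$ close to the identity in $\diff(S^2)$ with $\tau(q_i)=p_i$ for every $i$. Granting this, $\tau\circ\psi$ lies in $\aut(S^2,p_1,\dots,p_n)$ and, by continuity of composition in $\diff(S^2)$, still lies in the prescribed neighborhood of $\phi$, which is exactly the required density.

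To produce $\tau$, recall that by \cite{hm1} the evaluation map $E\colon \aut(S^2)\to (S^2)^n$, $\alpha\mapsto(\alpha(p_1),\dots,\alpha(p_n))$, surjects onto the open subset of $n$-tuples of distinct points, so \emph{some} such $\tau$ exists. What is actually needed is a continuous local section of $E$ near $(p_1,\dots,p_n)$ sending $(p_1,\dots,p_n)$ to the identity. I would construct such a section by exhibiting enough smooth one-parameter families in $\aut(S^2)$ through the identity --- rotations in $SO(3,\r)$ together with Cremona transformations $\sigma_{p,q}$ whose imaginary base points vary smoothly with a real parameter --- so that the resulting velocity vectors at $(p_1,\dots,p_n)$ span $\bigoplus_{i=1}^{n} T_{p_i}S^2$. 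The implicit function theorem then yields a local section; applied to $(q_1,\dots,q_n)$ it produces $\tau_0\in\aut(S^2)$ close to the identity with $\tau_0(p_i)=q_i$, and $\tau:=\tau_0^{-1}$ is the desired correction.

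The main obstacle is this local, quantitative refinement of $n$-transitivity. The purely algebraic proof in \cite{hm1} produces the corrective automorphism inductively as a finite product of Cremona transformations, and one must verify that the construction can be carried out so that the result varies continuously with the target tuple and degenerates to the identity in the limit $(q_1,\dots,q_n)\to(p_1,\dots,p_n)$. Once this is in place, the corollary follows by the formal composition argument above.
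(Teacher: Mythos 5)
Your proposal is correct and is essentially the paper's own proof (\ref{pf.of.i.S2n.thm}): approximate $\phi$ by $\psi_s\in\aut(S^2)$ using density, then compose with a near-identity automorphism sending $\psi_s(p_i)$ back to $p_i$. The quantitative, near-identity refinement of $n$-transitivity that you identify as the main obstacle is exactly what the paper imports from \cite{hm1} as (\ref{hm.recall.thm}.2.b) (two twisting maps, chosen near the identity when $q_i$ is near $p_i$), so no separate implicit-function-theorem construction is required.
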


Note that, for  a real algebraic variety $X$, the semigroup of algebraic
diffeomorphisms is usually much bigger than the
group of algebraic
automorphisms $\aut(X)$. For instance, 
$x\mapsto x+\frac1{x^2+1}$ is an algebraic
diffeomorphism of $\r$ (and also of $\r\p^1\sim S^1$),
but its inverse involves square and cube roots.
The  difference  is best seen
in the case of the circle $S^1=(x^2+y^2=1)$.

Essentially by the Weierstrass approximation theorem, any
differentiable map $\phi \colon S^1\to S^1$ can be approximated by
polynomial maps $ \Phi \colon S^1\to S^1$. By contrast, the
group of algebraic automorphisms of $S^1$ is the real  orthogonal group
$O(2,1)\cong PGL(2,\r)$, which has real dimension 3. 
Thus $\aut(S^1)$ is   a very small closed 
subgroup in the infinite dimensional group
 $\diff(S^1)$.

The Cremona transformations with real base points
do not give diffeomorphisms of $S^2$;
they are not even defined at the real base points.
Instead, they
give generators of the mapping class groups of
non-orientable surfaces.

Let $R_g$ be a non-orientable, compact surface of genus $g$ without boundary.
  Coming from algebraic geometry,
we prefer to think of it as $S^2$ blown up at $g$ points
$p_1,\dots, p_g\in S^2$.
Topologically,  $R_g$ is obtained from $S^2$ 
by replacing $g$ discs centered at the $p_i$  
by $g$ M\"obius bands.
Up to isotopy,  a blow-up form of $R_g$ is equivalent to
giving $g$ disjoint embedded M\"obius bands 
$M_1,\dots, M_g\subset R_g$.  

There are two ways to think of a 
Cremona transformation with real base points as giving
elements of the mapping class group of $R_g$.

Let us start with the case when there are four real base points
$p_1,\dots,p_4$.
We can factor the Cremona transformation $\sigma_{p_1,\dots,p_4}$
 as
$$
\sigma_{p_1,\dots,p_4} \colon
Q \stackrel{\pi_1}{\leftarrow} B_{p_1,\dots,p_4}Q \stackrel{\pi_2}{\to} Q
$$
where on the left $\pi_1 \colon B_{p_1,\dots,p_4}Q\to Q$ is the blow up 
of $Q$ at
the 4 points $p_1,\dots,p_4$ and on the right
$\pi_2 \colon B_{p_1,\dots,p_4}Q\to Q$
contracts the birational transforms of the
circles $Q\cap L_i$ where the $\{L_i\}$ are the faces of the
tetrahedron with vertices $\{p_i\}$.
In Figure~\ref{fig.cremona}, the $\bullet$ represent the 4 base points.
On the left hand side, the 4 exceptional curves $E_i$
lie over the four points marked $\bullet$.
On the right hand side, the images of the
$E_i$ are 4 circles, each passing through 3 of the 4 base points.
Since $\sigma_{p_1,\dots,p_4}$ is an involution, dually,
the four points marked $\bullet$ on the right hand side
  map to the 4 circles on the left hand side.
   \begin{figure}[ht]
   \epsfysize=4cm
    \epsfbox{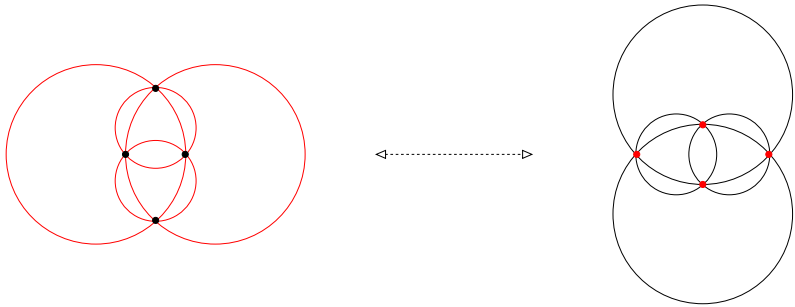} 
 \caption{Cremona transformation with four real base points.}
        \label{fig.cremona}
      \end{figure}

A Cremona transformation $\sigma_{p_1,p_2, q,\bar q}$  with
 2 real and a conjugate complex pair
of base points  act similarly.  Here only
two  M\"obius bands are altered.

In general, 
 we can think of the above real Cremona transformation
$\sigma_{p_1,\dots,p_4}$ as a topological operation that
 replaces the set of $g$ M\"obius bands
$\bigl(M_1,\dots, M_g\bigr)$
by a new set 
$\bigl(M'_1,\dots, M'_4, M_5,\dots,  M_g\bigr)$.
In this version, $\sigma_{p_1,\dots,p_4}$ is the identity
on the surfaces but acts nontrivially on the
set of isotopy classes of $g$ disjoint M\"obius bands.
One version of our result says that the
transformations  $\sigma_{p_1,\dots,p_4}$ 
and $\sigma_{p_1,p_2, q,\bar q}$  act transitively on the
set of isotopy classes of $g$ disjoint M\"obius bands.

The other way to view  $\sigma_{p_1,\dots,p_4}$
is as follows. First, we obtain an isomorphism 
$$
\sigma'_{p_1,\dots,p_4} \colon B_{p_1,\dots,p_g}S^2\cong 
B_{q_1,\dots,q_g}S^2
$$
 for some $q_1,\dots,q_g\in S^2$.
Under this isomorphism, the exceptional curve
$E(p_i)\subset B_{p_1,\dots,p_g}S^2$ is mapped to the exceptional curve
$E(q_i)\subset B_{q_1,\dots,q_g}S^2$ for $i\geq 5$ 
and to the circle passing through the points
$\{q_j: 1\leq j\leq 4, j\neq i\}$ for $i\leq 4$.
As we noted above, there is an automorphism
$\Phi\in \aut(S^2)$ such that
$\Phi(q_i)=p_i$ for $1\leq i\leq n$.
Thus 
$$
\Phi\circ \sigma'_{p_1,\dots,p_4} \colon B_{p_1,\dots,p_g}S^2
\stackrel{\cong}{\longrightarrow}
B_{p_1,\dots,p_g}S^2
$$
is an automorphism of $B_{p_1,\dots,p_g}S^2$
which maps $E(p_i)$ to 
$E(q_i)$ for $i\geq 5$ 
and to a simple closed curve passing through the points
$\{p_j: 1\leq j\leq 4, j\neq i\}$ for $i\leq 4$.

\begin{thm}\label{i.mpc.thm} For any $g$, 
 the Cremona transformations with $4,2$ or $0$ real base points
 generate the  (non-orientable) mapping class group 
${\mathcal M}_{g}$.
\end{thm}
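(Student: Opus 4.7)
The plan is to factor any $\phi\in\mathcal{M}_g$ as a product of Cremona transformations in two stages, mirroring the two descriptions of $\sigma_{p_1,\dots,p_4}$ given in the paragraphs preceding the theorem.

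\textbf{Stage 1 (transitivity on configurations).} Fix the realization $R_g=B_{p_1,\dots,p_g}S^2$ with standard disjoint M\"obius bands $M_1,\dots,M_g$ around the $p_i$. A mapping class carries $(M_1,\dots,M_g)$ to some other unordered $g$-tuple of disjoint embedded M\"obius bands $(M'_1,\dots,M'_g)$. I would first show that the Cremona transformations with $4$ or $2$ real base points act transitively on the isotopy classes of such configurations. The topological effect of $\sigma_{p_1,\dots,p_4}$, read off from Figure~\ref{fig.cremona}, is to leave $g-4$ of the bands alone and to replace the four others by new bands whose cores lie on the face circles of the tetrahedron. I would verify the base cases $g\le 4$ by direct inspection of that picture, and then induct on $g$ by using a single Cremona move to carry any chosen band to a standard position while keeping track of how the remaining bands are carried along. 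The mixed transformation $\sigma_{p_1,p_2,q,\bar q}$ supplies the parity-changing move needed when $g$ is not a multiple of $4$.

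\textbf{Stage 2 (stabilizer of a standard configuration).} After Stage 1 we may assume $\phi$ preserves the isotopy class of $(M_1,\dots,M_g)$. Such a $\phi$ is isotopic to a diffeomorphism that is the identity on each $M_i$ --- any residual twist on a single band can be absorbed by a small Cremona mutation concentrated near $p_i$ --- and so descends to a diffeomorphism of $S^2$ fixing the set $\{p_1,\dots,p_g\}$. By Corollary~\ref{i.S2n.thm}, this descended diffeomorphism is isotopic (rel.\ the marked points) to an algebraic automorphism of $(S^2,p_1,\dots,p_g)$. By Theorem~\ref{i.S2.thm}, the latter is a composition of elements of $O(3,1)$ with Cremona transformations with imaginary base points, i.e.\ with $0$ real base points. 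Lifting this composition to $R_g$ completes the factorization of $\phi$.

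\textbf{Main obstacle.} The heart of the argument is Stage 1. The explicit topological effect of a $4$-point Cremona transformation on the isotopy class of a M\"obius band configuration must be pinned down carefully, after which a combinatorial argument --- probably in the spirit of Lickorish's generation of $\mathcal{M}_g$ by Dehn twists and crosscap slides --- is needed to show that these specific mutations, combined with ambient automorphisms of $S^2$, suffice. The subtle point is to realize the crosscap slide (or $Y$-homeomorphism), which is the non-Dehn-twist generator required in the non-orientable case, as a consequence of the $2$-real-base-point Cremona transformation; without it one would miss an index-$2$ subgroup of $\mathcal{M}_g$.
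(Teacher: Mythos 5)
Your two-stage reduction (transitivity on M\"obius-band configurations, then the stabilizer of a standard configuration) is logically sound as a skeleton, and it is a genuinely different route from the paper, which instead starts from the Lickorish--Chillingworth--Korkmaz generators of $\mathcal{M}_g$ (Dehn twists and cross-cap slides), uses the lantern relation to cut the Dehn-twist generators down to curves through $0$, $2$ or $4$ of the blown-up points (Proposition~\ref{mapp.cl.our.gens}), and then matches each such generator explicitly with a Cremona transformation by comparing the images of the exceptional curves up to isotopy (Figures~\ref{fig.cremona}--\ref{fig.crosscap}), finishing with Corollary~\ref{i.S2n.thm}. The problem is that your Stage~1 is not a reduction but a restatement: the transitivity of the Cremona moves on isotopy classes of $g$ disjoint M\"obius bands is, as the introduction of the paper notes, ``one version of our result,'' i.e.\ it carries essentially the full strength of the theorem. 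Your proposed induction does not engage with the actual difficulty: a Cremona transformation $\sigma_{p_1,\dots,p_4}$ is only available as a move on a configuration whose four relevant bands are already the exceptional curves of a blow-up at four points in general position (i.e.\ already in standard position), so ``using a single Cremona move to carry any chosen band to a standard position'' presupposes control of the orbit that is exactly what you are trying to establish. Checking $g\le 4$ ``by inspection'' and inducting does not supply this; one would need to classify which quadruples of disjoint M\"obius bands can occur as the input of a move reachable from the standard configuration, which is the whole problem.

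Two further points. In Stage~2, the assertion that a residual twist on a band ``can be absorbed by a small Cremona mutation concentrated near $p_i$'' is not meaningful --- what you actually need there is that the mapping class group of the M\"obius band rel boundary is trivial (Epstein), so that an element stabilizing the configuration is determined by its descent to $(S^2,p_1,\dots,p_g)$; with that in hand, Stage~2 does follow from Corollary~\ref{i.S2n.thm} and Theorem~\ref{i.S2.thm} as you say. Finally, your closing worry about separately realizing the cross-cap slide reveals a tension in the plan: if Stages~1 and~2 both held, the cross-cap slides would be captured automatically (a subgroup acting transitively and containing a point stabilizer is the whole group), so no index-$2$ issue could arise; the fact that you still feel the slide must be produced by hand from $\sigma_{p_1,p_2,q,\bar q}$ is a sign that the load-bearing step is the explicit realization of generators --- which is precisely the paper's method --- rather than the unproved transitivity claim.
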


Finally, we can put these results together to obtain a general 
approximation theorem for 
diffeomorphisms of such real algebraic surfaces.

\begin{thm} \label{homeo.gen.dense}
 Let $R$ 
be a compact, smooth, real algebraic surface birational to $\p^2$
and $q_1,\dots, q_n\in R$ distinct marked points. Then
the group of algebraic automorphisms
$\aut(R, q_1,\dots, q_n)$ is dense in 
 $\diff(R, q_1,\dots, q_n)$.

As a topological manifold,  here $R$ can be
the sphere, the torus or any non-orientable  surface
$\r\p^2\#\cdots\# \r\p^2$. 
\end{thm}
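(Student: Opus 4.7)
The plan is to combine the preceding theorems with the topological classification of real rational surfaces and a descent argument from $R_g$ to $S^2$. By the Comessatti classification, any $R$ as in the hypothesis is diffeomorphic to $S^2$, $T^2$, or a non-orientable surface $R_g=\r\p^2\#\cdots\#\r\p^2$. The sphere case is Corollary~\ref{i.S2n.thm}, and the torus case follows from the Lukackii-type density recalled before Corollary~\ref{i.S2n.thm} combined with $n$-transitivity of $\aut(T^2)$, handled in exact analogy with the passage from Theorem~\ref{i.S2.thm} to Corollary~\ref{i.S2n.thm}. The new content is therefore the non-orientable case, to which I restrict.

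My plan for $R=R_g$ is first to prove density of $\aut(R_g)$ in $\diff(R_g)$ without marked points, and then to upgrade to the marked statement by the same transitivity device used to derive Corollary~\ref{i.S2n.thm}. For the unmarked statement I would write $R_g$ as a blow-up $\pi\colon R_g = B_{p_1,\dots,p_g}S^2 \to S^2$ at $g$ real points. Given $\phi\in\diff(R_g)$, Theorem~\ref{i.mpc.thm} provides an algebraic automorphism $\alpha\in\aut(R_g)$, a composition of real Cremona transformations, whose class in the mapping class group ${\mathcal M}_g$ equals that of $\phi$; hence $\phi\circ\alpha^{-1}$ is isotopic to the identity, and the problem is reduced to approximating identity-isotopic diffeomorphisms by automorphisms. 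For $\phi\circ\alpha^{-1}$ sufficiently $C^\infty$-close to the identity, each exceptional M\"obius band $M_i := \pi^{-1}(U_i)$, with $U_i$ a small disk about $p_i$, is moved only slightly, so a small tubular-neighborhood ambient isotopy adjusts it to a diffeomorphism preserving each $M_i$ together with its core circle. This adjusted diffeomorphism descends through $\pi$ to an element of $\diff(S^2,p_1,\dots,p_g)$ that is isotopic and close to the identity, and by Corollary~\ref{i.S2n.thm} it admits an algebraic approximation $\bar\beta\in\aut(S^2,p_1,\dots,p_g)$; the canonical lift of $\bar\beta$ to $R_g$ is the sought approximation of $\phi$. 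Finally, to impose the marking $q_1,\dots,q_n$, I would invoke $n$-transitivity of $\aut(R_g)$, obtained by lifting through $\pi$ the $n$-transitivity of $\aut(S^2)$ from \cite{bh, hm1}, to post-compose the above approximation with a small automorphism carrying its images of the $q_i$ back to the $q_i$.

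The main obstacle I foresee is quantitative control in the descent step: making precise, in the $C^\infty$ topology, that a diffeomorphism of $R_g$ sufficiently close to the identity can be isotoped through arbitrarily small diffeomorphisms to one compatible with the blow-down $\pi$. Locally, near an exceptional divisor this reduces to an isotopy-extension argument for an embedded $\r\p^1$ inside an $\r\p^1$-bundle over a disk, which is standard in principle; the delicate point is uniform smallness over all $g$ blow-up centers simultaneously, so that the algebraic automorphism of $S^2$ produced by Corollary~\ref{i.S2n.thm} lifts back to a genuine approximation of $\phi$ on $R_g$. The remaining steps — the mapping-class lift supplied by Theorem~\ref{i.mpc.thm} and the $n$-transitivity upgrade — I expect to be essentially formal once this descent is in place.
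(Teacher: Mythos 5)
Your overall architecture coincides with the paper's: first kill the mapping class by an algebraic automorphism (the paper's Theorem~\ref{map.class.gen.onto}, which is the marked-point version of Theorem~\ref{i.mpc.thm}), then prove density on the identity component (the paper's Proposition~\ref{homeo.identity.dense}); the sphere and torus cases are dispatched exactly as you say. The gap is in your treatment of the identity component. A first, repairable point: after composing with $\alpha^{-1}$ you only have a diffeomorphism \emph{isotopic} to the identity, not \emph{close} to it, so before your descent can begin you must factor it into near-identity pieces (e.g.\ by cutting the isotopy into short time intervals); you assert this reduction but do not perform it.

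The serious problem is the descent itself. Arranging that a near-identity diffeomorphism $\psi$ of $R_g$ preserves each exceptional curve $E_i$ (or each M\"obius band with its core) does \emph{not} make the induced map on $S^2$ a diffeomorphism: the blow-down of a diffeomorphism preserving $E_i$ is in general not smooth at $p_i$. In the chart $(x,m)$ with $y=mx$, the map $(x,m)\mapsto \bigl(x,\,m+xf(m)\bigr)$ preserves $E=\{x=0\}$ but descends to $(x,y)\mapsto \bigl(x,\,y+x^2f(y/x)\bigr)$, which already fails to be $C^2$ at the origin for generic $f$. So the obstacle is not ``quantitative control'' or ``uniform smallness over the $g$ centers'' but the qualitative fact that being a lift through $\pi$ is far more restrictive than preserving the exceptional locus; you would have to isotope $\psi$ near each $E_i$ to the actual lift of a diffeomorphism of a disc, and you give no argument for that. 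The paper avoids the issue entirely: Corollary~\ref{good.support.sets} produces finitely many \emph{different} blow-down models $\phi_j\colon R\to B_{p_{1j},\dots,p_{gj}}S^2$ together with an open cover $\{W_j\}$ such that $\phi_j(W_j)$ misses the $j$-th exceptional locus; a partition-of-unity argument writes the identity-isotopic diffeomorphism as a composite of diffeomorphisms each supported in a single $W_j$, and each factor then descends \emph{trivially} through $\phi_j$ (it is the identity near the exceptional curves), is approximated on $S^2$ rel the relevant points by Corollary~\ref{i.S2n.thm}, and lifts back. This device requires the nontrivial input (\ref{hm.recall.thm}.3) from \cite{hm1} that the blow-up centers can be moved to many different positions, for which your single-model approach has no substitute. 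Your end-run around the marked points via $n$-transitivity is workable in principle, but it needs the $R_g$-analogue of (\ref{hm.recall.thm}.2.b) (automorphisms near the identity realizing small motions of the $q_i$), which the paper instead obtains by carrying the marked points through Proposition~\ref{homeo.identity.dense} and Theorem~\ref{map.class.gen.onto} from the start.
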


\begin{say}[Other algebraic varieties]
Similar assertions definitely fail for most other
 algebraic varieties. Real algebraic varieties 
of general type have
only finitely many birational automorphisms.
(See \cite{ueno75} for an introduction to these questions.)
For varieties whose Kodaira dimension is between 0 and the dimension,
every birational automorphism preserves the Iitaka fibration.
If the Kodaira dimension is 0
(e.g., Calabi-Yau varieties, Abelian varieties),
then every  birational automorphism preserves the canonical class,
that is, a volume form, up to sign. The
automorphism group is finite dimensional but may have
infinitely many connected components.
In particular, using \cite{comessatti}, for surfaces we obtain
the following.

\begin{prop} Let $S$ be a smooth real algebraic surface.
If $S(\r)$ is an orientable surface of genus $\geq 2$
then $\aut(S)$ is not dense in $\diff\bigl(S(\r)\bigr)$.\qed
\end{prop}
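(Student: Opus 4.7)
The plan is to show that $\aut(S)$ sits inside $\diff(S(\r))$ as a finite-dimensional Lie subgroup, and then to conclude that such a subgroup cannot be dense in the infinite-dimensional ambient group. The hypothesis that $S(\r)$ is a connected orientable surface of genus $g\geq 2$ is first used to constrain the birational type of $S$.

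The first step is to establish $\kappa(S)\geq 0$. If $S$ were geometrically rational, Comessatti's theorem \cite{comessatti}, together with the fact that blowing up real points only adds non-orientable components, would force a connected orientable $S(\r)$ to be $S^2$ or $T^2$, incompatible with $g\geq 2$. If $S$ were ruled over a real base curve $C$, every component of $S(\r)$ would be an $S^1$-bundle over a component of $C(\r)$, plus possibly extra $S^2$ components coming from degenerate fibers, hence a sphere, torus, or Klein bottle; again impossible. So $S$ is not ruled and $\kappa(S)\geq 0$.

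Once we are in Kodaira dimension $\geq 0$, the classical classification of automorphism groups of non-ruled surfaces (see \cite{ueno75}) yields that $\aut(S)$ is a finite-dimensional real algebraic group, with $\dim \aut^0(S)\leq 2$: the bound is zero for $\kappa=2$, at most $1$ for $\kappa=1$ (where $\aut^0$ acts by translations along the elliptic Iitaka fibration), and at most $2$ for $\kappa=0$ (attained by the translation action on an abelian surface). Restricting to the real locus, $\aut(S)(\r)$ is therefore a finite-dimensional Lie subgroup of $\diff(S(\r))$.

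To finish, one has to argue that this finite-dimensional subgroup cannot be dense in $\diff(\Sigma_g)$, which is infinite-dimensional for $g\geq 2$. The cleanest path is to invoke the Bochner--Montgomery theorem: a finite-dimensional Lie subgroup of the diffeomorphism group of a compact manifold is automatically closed in the $C^\infty$ topology, so $\aut(S)(\r)$ equals its own closure and is therefore a proper subgroup. I expect the main obstacle to be precisely this closedness statement; a more elementary alternative is to exhibit a non-trivial geometric invariant preserved by $\aut(S)$ but not by a generic diffeomorphism --- for instance, the fixed part of the pluricanonical linear system when $\kappa\geq 1$, or the real volume form obtained from $\omega\wedge\bar\omega$ for a holomorphic $2$-form $\omega$ when $\kappa=0$ --- thereby trapping $\aut(S)(\r)$ inside a proper closed subgroup of $\diff(S(\r))$.
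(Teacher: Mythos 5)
Your proposal is correct and follows essentially the same route as the paper, which offers no written proof beyond the preceding discussion: Comessatti (plus the ruled case) forces $\kappa(S)\geq 0$, and then the automorphisms are trapped in a proper closed subgroup of $\diff(S(\r))$ because they preserve the Iitaka fibration ($\kappa\geq 1$) or a volume form up to sign ($\kappa=0$), or are finite in number ($\kappa=2$). Your fallback argument via a preserved geometric structure is exactly the paper's intended mechanism and is the one to rely on; the Bochner--Montgomery route is shakier, since that theorem gives smoothness of finite-dimensional group actions rather than closedness of the subgroup in the $C^\infty$ topology.
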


If $X$ has Kodaira dimension $-\infty$,
then  every  birational automorphism preserves the MRC fibration
\cite[Sec.IV.5]{rc-book}.
Thus the main case when density could hold is when the variety
is rationally connected \cite[Sec.IV.3]{rc-book}.
It is clear that the analog of
(\ref{i.S2.thm}) fails even 
for most geometrically rational real algebraic surfaces.
Consider, for instance, the case when
$R\to \p^1$ is a minimal conic bundle with at least 8 singular fibers.
Then $\aut(R)$ is infinite dimensional, but
every automorphism of $R$ preserves the
conic bundle structure \cite[Thm.~1.6(iii)]{is}.  
Conic bundles with 4 singular fibers are probably the
only other case where the analog of (\ref{homeo.gen.dense})
holds.

The results of \cite{luk-sib} imply that
 $\aut(S^n)$ is  dense in $\diff(S^n)$ for every $n\geq 2$
and, similarly,   $\aut(T^n)$ is  dense in $\diff(T^n)$ for every $n\geq 2$,
where $T^n$ is the $n$-dimensional torus.
It is not clear to us what happens with other 
varieties birational to $\p^n$.
\end{say}

\begin{say}[History of related questions]
There are many results in real algebraic geometry
that endow certain topological spaces with a real 
algebraic structure or approximate smooth maps by 
real algebraic morphisms.
 In particular real rational models of surfaces were studied in
\cite{bcr}, \cite{ma06}
and approximations of smooth maps to  spheres by 
real algebraic morphisms  were investigated in
\cite{bk1,bk2}, 
\cite{bks}, 
\cite{Ku}, \cite{jk}, \cite{jm}, 
\cite{ma06}.

An indication that $\aut(S^2)$ is surprisingly large
comes from \cite{bh}, with a more precise version developed in 
\cite{hm1}. 
\end{say}

\begin{say}[Plan of the proofs] 
In Section \ref{sec.gens} we  prove that the
Cremona transformations with imaginary base points
generate $\aut(S^2)$. 
Next, in Section~\ref{sec.identity}, we prove (\ref{homeo.gen.dense}) for the
identity components.  If 
$\phi \colon R\to R$ is homotopic to the identity, then
$\phi$ can be written as the composite of
diffeomorphisms $\phi_i \colon R\to R$ such that each
$\phi_i$ is the identity outside a small open set
$W_i\subset R$. Moreover, we can choose the $W_i$ in such a way
that for every $i$ there is a morphism $\pi_i \colon R\to S^2$ that is
an isomorphism on $W_i$.  The map $\phi_i$ then pushes down to
a diffeomorphism of $S^2$. We take an approximation there and 
lift it to $R$. 

 The  case   $R=S^1\times S^1$ follows from \cite{luk-zam}.

Generators of the mapping class group of non-orientable surfaces
have been
written down by \cite{chi} and \cite{kor}. 
In Section \ref{sec.mapping} we describe
 a somewhat  different set
of generators. We thank  M.\ Korkmaz for his help in proving
these results.

Theorem \ref{i.mpc.thm} is proved in Section \ref{mapping.cl.proofs}.
 We show by explicit
constructions that our generators are given by Cremona transformations.

\end{say}

\begin{ack} We thank W.\ Browder, N.\ A'Campo, S.\ Cantat, D.\ Gabai, 
 J.\ Huisman, I.\ Itenberg, V.\ Kharlamov and A.\ Okounkov for many useful 
conversations.
 We are especially grateful to M.\ Korkmaz
for his help with understanding the non-orientable mapping class group.
M.\ Zaidenberg called our attention to the papers of Lukacki{\u\i}.
These enabled us to shorten the proofs and to improve the results.

Partial financial support  for JK
was provided by  the NSF under grant number 
DMS-0500198. The research of FM was partially supported by the  
ANR grant ``JCLAMA'' of the french ``Agence Nationale de la Recherche.'' 
He  also benefited from the hospitality of Princeton University.
\end{ack}

\section{Generators of $\aut(S^2)$}\label{sec.gens}

Max Noether proved that the involution
$(x,y,z)\mapsto \bigl(\frac1{x},\frac1{y},\frac1{z}\bigr)$ and
$PGL_3$ generate the group of birational self-maps $\bir(\p^2)$ over $\c$.
Using similar ideas,
 \cite{rv}  proved that $\aut(\p^2(\r))$ is generated by linear automorphisms
 and certain real algebraic automorphisms of degree 5. 
In this section, we prove that $\aut(S^2)$ is generated by linear automorphisms
 and by the $\sigma_{p.q}$. The latter are
 real algebraic automorphisms of degree 3. 

\begin{exmp}[Cubic involutions of $\p^3$]\label{cubic.inv.exmp}
 On $\p^3$ take coordinates $(x,y,z,t)$.
We need two types of cubic involutions of $\p^3$.
Let us start with the  Cremona
transformation 
$$
(x,y,z,t)\mapsto \bigl(\tfrac1{x},\tfrac1{y},\tfrac1{z},\tfrac1{t}\bigr)=
\tfrac1{xyzt}(yzt, ztx, txy, xyz)
$$
whose base points are the 4 ``coordinate vertices''.
We will need to put the base points at complex conjugate
point pairs, say $(1,\pm i,0,0), (0,0,1,\pm i)$.
Then the above involution becomes
$$
\tau \colon 
(x,y,z,t)\mapsto \bigl((x^2+y^2)z, (x^2+y^2)t, (z^2+t^2)x,(z^2+t^2)y\bigr).
$$
Check that 
$$
\tau^2(x,y,z,t)=(x^2+y^2)^2(z^2+t^2)^2\cdot (x,y,z,t), 
$$
thus $\tau$ is indeed a rational  involution on $\p^3$.

Consider a general quadric  passing through
the points
$(1,\pm i,0,0), (0,0,1,\pm i)$. It is 
of the form
$$
Q=Q_{abcdef}(x,y,z,t):=a(x^2+y^2)+b(z^2+t^2)+cxz+dyt+ext+fyz.
$$
By direct computation, 
$$
Q_{abcdef}\bigl(\tau(x,y,z,t)\bigr)=(x^2+y^2)(z^2+t^2)\cdot Q_{abcdfe}(x,y,z,t).
$$
(Note that $ef$ changes to $fe$.
Thus, if $e=f$, then  $\tau$ restricts to an involution of the quadric
$(Q=0)$ but not in general.)

Assume now that we are over $\r$. We claim that $\tau$ is regular on
the real points if $a,b\neq 0$.
The only possible problem is with points where 
$(x^2+y^2)(z^2+t^2)=0$. Assume that $(x^2+y^2)=0$.
Then $x=y=0$ and so $Q(x,y,z,t)=0$ gives that
$b(z^2+t^2)=0$ hence $z=t=0$, a contradiction.

Whenever $Q$ has signature $(3,1)$, we can view
$(Q=0)$ as a sphere and then $\tau$  gives a  real algebraic
automorphism of the sphere $S^2$, which
 is well defined  up to left and right multiplication by $O(3,1)$.
A priori the automorphisms depend on $a,b,c,d,e,f$,
so let us denote them by $\tau_{abcdef}$.

Given $S^2$, the above $\tau_{abcdef}$
depends on the choice
of the base points, that is, 2 conjugate pairs of
points on the complex quadric $S^2(\c)$.
The group $O(3,1)$ has real dimension 6. Picking
2 complex points has real dimension 8. So 
the $\tau_{abcdef}$ 
should give a real  2-dimensional family
of automorphisms modulo $O(3,1)$.

We also need a degenerate version of the Cremona transformation
when the 4 base points come together to a pair of points.
With  base points  $(1,0,0,0)$ and $(0,1,0,0)$, we get
$$
(x,y,z,t)\mapsto (xz^2, yt^2, zt^2, z^2t).
$$ 
If we put the base points at $(1,\pm i,0,0)$
then we get the transformation
$$
\sigma' \colon (x,y,z,t)\mapsto 
\bigl(y(z^2-t^2)+2xzt, x(t^2-z^2)+2yzt, t(z^2+t^2), z(z^2+t^2)\bigr).
$$

Take any quadric of the form 
$$
Q=Q'_{abcdef}(x,y,z,t):=a(x^2+y^2)+bz^2+czt+dt^2+e(xt+yz)+f(xz-yt).
$$
By direct computation, 
$$
Q'_{abcdef}\bigl(\sigma'(x,y,z,t)\bigr)=(z^2+t^2)^2\cdot 
Q'_{adcbef}(x,y,z,t).
$$
As before, if $Q'$ has signature $(3,1)$, we can view
$(Q'=0)$ as a sphere and then $\sigma'$ gives a  real algebraic
automorphism of the sphere $S^2$, which
 is well defined  up to left and right multiplication by $O(3,1)$.
Let us denote them by $\sigma_{abcdef}$.
Despite the dimension count, the group  $O(3,1)$ does not act
with a dense orbit on the set
of complex conjugate point pairs and complex conjugate directions.
Indeed, after complexification, the quadric becomes
$\p^1\times \p^1$ and  we can chose the two
points to be $p_1:=(0,0)$ and $p_2:=(\infty,\infty)$.
The subgroup fixing these two points is
$\c^*\times \c^*$ and the diagonal acts trivially on
the tangent directions at both of the points $p_i$.
Thus the  $\sigma_{abcdef}$ form a 1-dimensional family.
\end{exmp}

\begin{thm}\label{generators.thm}
 The group of algebraic automorphisms of $S^2$
is generated by $O(3,1)$, the $\tau_{abcdef}$ and $\sigma_{abcdef}$.
\end{thm}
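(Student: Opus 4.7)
The plan is to proceed by induction on the degree $n$ of $\phi\in\aut(S^2)$, where $n$ is defined as follows. The complexification of $\phi$ is a birational self-map $\phi_\c\colon Q_\c\dashrightarrow Q_\c$ of the complex quadric $Q_\c\cong\p^1\times\p^1$, regular on $S^2(\r)$. Because complex conjugation exchanges the two rulings of $Q_\c$, the Galois-invariant part of $\pic(Q_\c)$ is generated by $\o_Q(1)$, so $\phi_\c^*\o_Q(1)\cong\o_Q(n)$ for a well-defined integer $n\geq 1$, which I take as the degree of $\phi$.

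The base case $n=1$ is immediate: $\phi$ is then induced by a linear automorphism of $\p^3$ preserving $Q$, so lies in $O(3,1)$. For the inductive step, resolve the indeterminacy of $\phi_\c$ by blow-ups; since $\phi$ is regular on $S^2(\r)$, every base point $p_i\in Q_\c$ (possibly infinitely near) is non-real, and the Galois group $\gal(\c/\r)$ partitions the resolution data into complex conjugate pairs with equal multiplicities $m_i$. The intersection-theoretic identities on $Q_\c$, namely $\sum m_i^2=2n^2-2$ and $\sum m_i=4n-4$, together with the Galois symmetry (conjugate points share multiplicities), force a heavy Galois-invariant base-point configuration of one of two shapes: either two conjugate pairs $\{p,\bar p\},\{q,\bar q\}$ whose multiplicities suffice to apply $\tau$, or a single conjugate pair $\{p,\bar p\}$ carrying enough infinitely near data to apply $\sigma'$.

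Once such a heavy configuration is located, I would use the $6$-dimensional group $O(3,1)$, which acts transitively on unordered pairs of complex conjugate points of $Q_\c$ (with only finite further stabilizer on such pairs plus tangent data), to move the configuration into the standard position described in (\ref{cubic.inv.exmp}): the four points $(1,\pm i,0,0),(0,0,1,\pm i)$ for a $\tau_{abcdef}$, or the pair $(1,\pm i,0,0)$ with prescribed tangent direction for a $\sigma_{abcdef}$. The residual two-parameter family of $\tau_{abcdef}$'s and one-parameter family of $\sigma_{abcdef}$'s identified in (\ref{cubic.inv.exmp}) provide precisely the freedom to match the higher-order local behaviour of $\phi$ at the heavy points. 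Composing $\phi$ on one side by the resulting involution, and on the other side by an element of $O(3,1)$ to realign the target, cancels the heavy base points and produces a new algebraic automorphism of strictly smaller degree; iterating returns to the base case.

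The main obstacle will be the combinatorial analysis showing that for every $n>1$ at least one of the two heavy-configuration scenarios occurs in a Galois-equivariant way; in particular, when every conjugate pair has small proper multiplicity, one must argue that infinitely near points concentrate enough on some pair to trigger case (b). A second subtlety is tracking the degree drop through the resolution data in the presence of infinitely near points, to confirm that composition with $\tau$ or $\sigma'$ actually lowers $n$ by a positive amount. The underlying philosophy is a real, sphere-regular variant of the Noether--Castelnuovo theorem for $\p^1\times\p^1$, made equivariant under the antiholomorphic involution cutting out $S^2\subset Q_\c$.
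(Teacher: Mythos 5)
Your outline coincides with the paper's proof: both run the Noether--Fano method on the real quadric, using the two identities $\sum m_i^2=2d^2-2$ and $\sum m_i=4d-4$ for the linear system $\Gamma=\Phi^*|H|$ and untwisting by a cubic involution centered at a Galois-stable quadruple of base points of total multiplicity $>2d$. But the step you defer as ``the main obstacle'' --- showing that such a quadruple always exists when $d>1$ --- \emph{is} the proof, and leaving it open is a genuine gap rather than a technicality. The paper closes it as follows. If every $m_i\leq d/2$, then
$$
2d^2-2=\sum m_i^2\leq \tfrac{d}{2}\sum m_i=\tfrac{d}{2}(4d-4)=2d^2-2d,
$$
forcing $d=1$. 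Otherwise, since all base points are non-real and conjugate points carry equal multiplicities, some conjugate pair has $m_1=m_2=\frac{d}{2}+a$ with $0<a\leq d/2$; assuming every remaining $m_j\leq\frac{d}{2}-a$ and bounding $\sum_{j\geq3}m_j^2$ by $\bigl(\frac{d}{2}-a\bigr)\sum_{j\geq3}m_j$, the paper expands and arrives at $(a+2)(d-4)\leq -6$, whence $d\leq 3$; the cases $d=3$ (forces $a\geq 4>d/2$) and $d=2$ (forces $a=1$, i.e.\ quadric sections with two nodes, hence reducible members of $\Gamma$) are excluded by hand. So a second conjugate pair satisfies $m_3=m_4>\frac{d}{2}-a$ and the four multiplicities sum to more than $2d$. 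That second pair may be infinitely near the first --- this is exactly why the degenerate involutions $\sigma_{abcdef}$ are needed alongside the $\tau_{abcdef}$, and the paper verifies separately that the degree still drops to $3d-s<d$ in the infinitely near case.

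Two further points you should not wave away. First, for the untwisting to be a genuine quadratic-type transformation one must check that no two of the four chosen points lie on a line of $Q(\mathbb{C})$ and not all four lie on a conic; the paper gets this from $(L\cdot\Gamma)=d$ (a line through a conjugate pair would be real, and $S^2$ contains no real line) and $(C\cdot\Gamma)=2d<s$. Second, your claim that $O(3,1)$ acts transitively on conjugate point pairs together with tangent data is contradicted by the paper's own remark in (\ref{cubic.inv.exmp}): the stabilizer of two points is $\mathbb{C}^*\times\mathbb{C}^*$ whose diagonal acts trivially on the tangent directions, so the $\sigma_{abcdef}$ form a genuinely $1$-dimensional family of inequivalent involutions. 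Fortunately nothing needs to be ``matched to higher order'': one only needs \emph{some} cubic involution with the prescribed (possibly infinitely near) base points carrying $Q$ to a real quadric of signature $(3,1)$, which the explicit formulas of (\ref{cubic.inv.exmp}) supply.
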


\begin{rem} It is possible that the $\tau_{abcdef}$ alone generate
$\aut(S^2)$. In any case, as the 4 base points come together
to form 2 pairs, the  $\tau_{abcdef}$
converge to the corresponding $\sigma_{abcdef}$.
Thus the $\tau_{abcdef}$ generate a dense subgroup of $\aut(S^2)$
(in the $C^{\infty}$-topology.)

One reason to use the $\sigma_{abcdef}$ is that, 
as the proof  shows,  the 
$\tau_{abcdef}$ and $\sigma_{abcdef}$ together generate $\aut(S^2)$
in an ``effective manner.'' By this we mean the following.

Any rational map $\Phi \colon S^2\map S^2$ can be given by
4  polynomials
$$
\Phi(x,y,z,t)=\bigl(\Phi_1,\Phi_2, \Phi_3,\Phi_4\bigr).
$$
Note that $\Phi$ does not determine the $\Phi_i$
uniquely, but there is a ``minimal'' choice.
We can add any multiple of $x^2+y^2+z^2-t^2$ to the $\Phi_i$
and we can cancel common factors. 
We choose $\max_i\{\deg \Phi_i\}$ to be minimal and
call it the {\it degree} of $\Phi$. 
It is denoted by $\deg \Phi$. (It is easy to see that these 
minimal $\Phi_i$ are unique up to a multiplicative constant.)
Note that $\deg\Phi=1$ iff $\Phi\in O(3,1)$.

By ``effective'' generation we mean that
given any $\Phi\in \aut(S^2)$ with $\deg \Phi>1$, there is a
$\rho$ which is either of the form 
$\tau_{abcdef}$ or  $\sigma_{abcdef}$ such that
$$
\deg\bigl(\Phi\circ \rho)<\deg \Phi.
$$
\end{rem}

\begin{say}[Proof of (\ref{generators.thm})]

The proof is an application of the Noether-Fano method.
See \cite[Secs. 2.2--3]{ksc} for details.

  Let  $k$ be a field and $Q\subset \p^3$ a quadric 
defined over $k$. Assume that $\pic(Q)=\z[H]$
where $H$ is the hyperplane class.
Let  $Q'$ be any other quadric and $\Phi \colon Q\map Q'$ a birational map.
Then $\Gamma:=\Phi^*|H_{Q'}|$ is a 3-dimensional linear system on $Q$ and
$\Gamma\subset |dH_Q|$ for some $d$. Let $p_i$ be 
the (possibly infinitely near)  base points of $\Gamma$
(over $\bar k$)  and
$m_i$  their multiplicities. 
As in \cite[2.8]{ksc}, we have the equalities
$$
\Gamma^2-\sum m_i^2=\deg Q'\qtq{and}
\Gamma\cdot K_Q+\sum m_i = \deg K_{Q'}.
$$
In our case, these become
$$
\sum m_i^2=2d^2-2\qtq{and} \sum m_i=4d-4.
$$

Next we see how the transformations
$\tau_{abcdef}$ and $\sigma_{abcdef}$
change the degree of a linear system $\Gamma$.

\begin{exmp}[Cremona transformation on a quadric]

For the $\tau_{abcdef}$ series,
pick 4 distinct points $p_1,\dots, p_4\in Q$ 
such that no two are on a line in $Q$, not all 4 on a conic
and assume that
$s:=m_1+\cdots+m_4>2d$. Blow up the 4 points and contract
the 4 conics that pass through any 3 of them.
The $p_i$ are replaced by 4 base points of
multiplicities $2d-s+m_i$. Their sum is $8d-4s+s=8d-3s$.
Thus $4d-4=\sum m_i$ is replaced by
$\sum m_i-s+(8d-s)$,
hence $d$ becomes $d-(s-2d)<d$.

For  $\sigma_{abcdef}$,
pick 2 distinct points $p_1,p_2\in Q$
and 2 infinitely near points $p_3\to p_1$ and $p_4\to p_2$ 
such that no two are on a line in $Q$, not all 4 on a conic 
and assume that
$s:=m_1+\cdots+m_4>2d$. Blow up the  points 
$p_1,p_2$ and then the  points 
$p_3,p_4$. After this, we can
contract
the two conics that pass through $p_1+p_2+p_3$ (resp.\ $p_1+p_2+p_4$)
and we can also contract the birational transforms
of the exceptional curves over $p_1$ and $p_2$.
The rest of the computation is the same.
The $p_i$ are replaced by 4 base points of
multiplicities $2d-s+m_i$. Their sum is $8d-4s+s=8d-3s$.
Thus $4d-4=\sum m_i$ is replaced by
$\sum m_i-s+(8d-s)$
hence $d$ becomes $d-(s-2d)<d$.
\end{exmp}

Thus, as long as we can find $p_1,\dots, p_4\in Q$ 
(or infinitely near) such that
$m_1+\cdots+m_4>2d$, we can lower $\deg\Phi$ by a suitable
degree 3 Cremona transformation.

In order to find such $p_i$, assume 
first to the contrary that  $m_i\leq d/2$ for every $i$.
Then
$$
2d^2-2=\sum m_i^2\leq \tfrac{d}{2} \sum m_i=\tfrac{d}{2}(4d-4)=2d^2-2d.
$$
This is a contradiction, unless $d=1$ and $\Phi$ is a linear
isomorphism.

If we work over $\r$ and we assume that there are no real base points,
then we have at least one  complex conjugate pair of
base points with multiplicity $m_i>d/2$.
We are done if we have found 2 such pairs.

In any case, up to renumbering the points,
 we have $m_1=m_2=\frac{d}{2}+a$ for some $d/2\geq a>0$.
Assume next that all the other $m_j\leq \frac{d}{2}-a$.
Then 
$$
\begin{array}{rcl}
2d^2-2=\sum m_i^2 &\leq & 2\bigl(\tfrac{d}{2}-a\bigr)^2+
\bigl(\tfrac{d}{2}-a\bigr)\bigl(\sum m_i-d+2a\bigr)\\
&=&2\bigl(\tfrac{d}{2}-a\bigr)^2+
\bigl(\tfrac{d}{2}-a\bigr)\bigl(4d-4-d+2a\bigr).
\end{array}
$$
By expanding, this becomes
$$
(a+2)(d-4)\leq -6.
$$
Thus $d\in\{1,2,3\}$. If $d=3$ then $a+2\geq 6$ so $d/2\geq a\geq 4$
gives a contradiction. If $d=2$ then we get $a=1$. 
Thus $\Gamma$ consists of quadric sections with
singular points at $p_1,p_2$. These are necessarily reducible
(they have $p_a=1$  with 2 singular points), again impossible.

We also need to show that no two of the points lie
on a line and not all 4 are on a conic. 
For any line $L\subset Q(\c)$, 
$(L\cdot \Gamma)=d$ gives that 
$$
\sum_{i:p_i\in L}m_i\leq d.
$$
In particular, $m_i\leq d$ for every $i$ and
if $p_i,p_j$ are on a line then $m_i+m_j\leq d$.
Thus out of $p_1,\dots,p_4$ only $p_3,p_4$ could be on a line.
But $p_3,p_4$ are conjugates, thus they would be on a real
line. There is, however, no real line on $S^2$.

Similarly, for any conic  $C\subset Q(\c)$, 
$(C\cdot \Gamma)=2d$ gives that 
$\sum_{i:p_i\in C}m_i\leq 2d$.
Thus not all 4 points are on a conic. 
\end{say}

\begin{rem} Note that we started the proof over an arbitrary field,
but at the end we had to assume that that we worked over $\r$.
For a quadric surface $Q$ with Picard number one,
the above method should give generators for the group
$\bir^*(Q)$ of those birational self-maps that are regular 
along $Q(k)$. However, for other fields $k$, other
generators also appear if there are more than 2
conjugate base points.
\end{rem}

\section{The identity component}\label{sec.identity}

The purpose of this section is to prove (\ref{homeo.gen.dense}) for the
identity components. 
For the sphere and the torus these were done by Lukacki{\u\i}.
 Next we prove (\ref{homeo.gen.dense}) for the
identity components in the case $R$ is the non-orientable surface $R_g$.

\begin{say}[The results of Lukacki{\u\i}]\label{luk.say}
The paper \cite[Thm.~2]{luk-sib}  proves that $SO(n+1,1)$ is a maximal
closed subgroup of $\diff_0(S^n)$. In particular, 
$O(n+1,1)$ and anything else generate a dense subgroup of $\diff(S^n)$.

Since this result seems not to have been well known, let us
give a quick review of the steps of the proof.

We start with the Lie algebra of polynomial vector fields
$H^0(S^n,T_{S^n})$. Its structure   as an $\mathfrak{so}(n+1)$
representation   was described by \cite{kir},
including the highest weight vectors.

As we go from $\mathfrak{so}(n+1)$ to $\mathfrak {so}(n+1,1)$,
we get extra unipotent elements and their action on the
highest weight vectors can be computed explicitly.
One obtains that $\mathfrak {so}(n+1,1)$ is a 
maximal Lie subalgebra of $H^0(S^n,T_{S^n})$.
This implies that $SO(n+1,1)$ is a maximal
connected closed subgroup of $\diff_0(S^n)$.
It is easy to check that  $SO(n+1,1)$ is its own normalizer, which rules
out all disconnected subgroups as well.

The paper \cite{luk-zam} gives generators
of the Lie algebra $H^0(T^n,T_{T^n})$
 where $T^n$ denotes the $n$-dimensional torus
$$
T^n:=\bigl(x_1^2+y_1^2-1=\cdots=x_n^2+y_n^2-1=0\bigr)\subset \r^{2n}.
$$
 This is again through explicit Lie theory.
Up to coordinate changes by $GL(n,\z)$, the generators
 are the shears
$$
g(x_1,\dots,x_{n-1})\cdot 
\Bigl( \frac{\partial}{\partial x_n} -\frac{\partial}{\partial y_n}\Bigr)
\qtq{and}
y_n\cdot 
\Bigl(\frac{\partial}{\partial x_n} -\frac{\partial}{\partial y_n}\Bigr).
$$
(Using polar angles $\phi_i$, the latter
is the vector field $\sin\phi_n\cdot \bigl(\partial/\partial \phi_n\bigr)$.)
Up to a factor of 2, this is exactly the tangent vector field
corresponding to the unipotent group
$$
\Bigl(
\begin{array}{cc}
1 & t\\
0 & 1
\end{array}
\Bigr)\subset PGL(2,\r)\cong O(2,1)\qtq{acting on $S^1$.}
$$
\end{say}

\begin{defn}\label{twist.defn}
Let $X$~and $Y$ be real algebraic manifolds and let~$I$ be any subset of~$X$. 
A map~$f$ from~$I$ into $Y$ is \emph{algebraic} if there is a Zariski 
open subset~$U$
of~$X$ containing~$I$ such that~$f$ is the restriction of an
algebraic map from~$U$ into~$Y$.

Consider the standard sphere $S^2\subset \r^3$
and let $L$ be a line through the origin.
Choose coordinates such that $L$ is the $x$-axis
and 
$S^2:=(x^2+y^2+z^2=1)\subset \r^3$.
Let $M\colon[-1,1] \to O(2)$ be a real algebraic map. Then
$$
\Phi_M \colon S^2 \to S^2   
\qtq{given by}
(x,y,z)\mapsto \bigl(x,(y,z)\cdot M(x)\bigr)
$$
is an automorphism of $S^2$, called
the {\it twisting map} with axis $L$ and associated to $M$.
A conjugate of a twisting map by an element of $O(3,1)$
is also called a twisting map.
\end{defn}

The following results are proved in \cite{hm1}.

\begin{thm} \label{hm.recall.thm}
Notation as above.
\begin{enumerate}
\item Any $C^{\infty}$ map $M_0:[-1,1] \to O(2)$
can be approximated by  real algebraic maps
$M_s:[-1,1] \to O(2)$. Moreover, given finitely many points
$t_i\in [-1,1]$, we can choose the $M_s$
such that $M_s(t_i)=M_0(t_i)$ for every $i$.
\item Given distinct points $p_1,\dots, p_m$ and 
$q_1,\dots, q_m$ there are two twisting maps
(with different axes) $\Phi_1$ and $\Phi_2$
such that $\Phi_1\circ \Phi_2(q_i)=p_i$ for every $i$.
Moreover, 
\begin{enumerate}
\item if $p_j=q_j$ for some values of $j$ then we can assume that
$\Phi_1(q_j)=\Phi_2(q_j)=q_j$ for these values of $j$, and
\item if $p_i$ is near $q_i$ for every $i$ then
we can assume that the $\Phi_1, \Phi_2$ are near the identity.
\end{enumerate}
\item Let $R$ be any real algebraic surface that is obtained from
$S^2$ by repeatedly blowing up $m$ real  
(possibly infinitely near) points and let
$r_1,\dots, r_n$ be points in $R$. Then 
there are (nonunique) distinct points
 $p_1,\dots, p_m$ and 
$q_1,\dots, q_n$ and an isomorphism
$\phi:R\to B_{p_1,\dots, p_m}S^2$ such that $\phi(r_i)=q_i$.
\end{enumerate}
\end{thm}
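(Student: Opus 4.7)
For part (1), since $[-1,1]$ is connected, the image of $M_0$ lies in a single component of $O(2)$, so after multiplying by a fixed element of $O(2)$ one may assume the image lies in $SO(2)$. The key tool is the rational parametrization $t\mapsto \bigl(\tfrac{1-t^2}{1+t^2},\tfrac{2t}{1+t^2}\bigr)$, which identifies $\mathbb{A}^1$ with $SO(2)\setminus\{(-1,0)\}$. After precomposing $M_0$ with a rotation so that its image avoids the omitted point (handling the surjective case by covering $[-1,1]$ with two overlapping subintervals, each worked in a different rational chart, and patching), I would lift $M_0$ smoothly to $f_0:[-1,1]\to\mathbb{R}$ and apply the Weierstrass theorem to produce a polynomial $f_s$ uniformly close to $f_0$. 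Adding a Hermite-interpolation correction vanishing to first order at the prescribed $t_i$ enforces $f_s(t_i)=f_0(t_i)$; substituting $f_s$ back into the rational parametrization yields the desired algebraic $M_s$.

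For part (2), the plan is to fix two distinct axes $L_1, L_2$ in $\mathbb{R}^3$ with associated height functions $\pi_j:S^2\to[-1,1]$. Each twisting map $\Phi_j$ with axis $L_j$ preserves the $\pi_j$-fibers (latitude circles) and rotates within each by an angle determined by the associated $M_j$. For each $i$, I would choose an intermediate point $r_i\in S^2$ with $\pi_2(r_i)=\pi_2(q_i)$ and $\pi_1(r_i)=\pi_1(p_i)$; such $r_i$ exists (with generically two choices) because a latitude in one foliation meets a latitude in the other in two points, and a generic choice of axes arranges all required intersections. Then build $\Phi_2$ by assigning, on each relevant $\pi_2$-latitude, the rotation sending $q_i$ to $r_i$, and $\Phi_1$ similarly for $r_i \mapsto p_i$; part (1) realizes both as algebraic twisting maps. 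Item (a) is handled by choosing $r_j = q_j$ whenever $p_j=q_j$ and enforcing the identity rotation at $\pi_j(q_j)$; item (b) follows because a small perturbation of $q_i$ permits $r_i$ close to $q_i$ and small rotation angles throughout, giving $\Phi_j$ close to the identity.

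For part (3), I would induct on $m$. Write $R = B_r R'$ with $R'$ an $(m-1)$-fold blowup of $S^2$; by the inductive hypothesis $R' \cong B_{p_1,\dots,p_{m-1}}S^2$. If the center $r$ corresponds to a point of $S^2$ distinct from $p_1,\dots,p_{m-1}$, take $p_m$ to be this image and set $q_i = \phi(r_i)$ under the induced isomorphism. The main obstacle is the case when $r$ lies on an exceptional divisor, producing an infinitely near configuration: one must rewrite $R$ as a blowup of $S^2$ at $m$ distinct real points by exhibiting on $R$ an alternative collection of $m$ pairwise disjoint real $(-1)$-curves whose simultaneous contraction yields $S^2$. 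The plan is to apply Cremona transformations from Theorem \ref{i.S2.thm} as elementary birational moves that reshuffle exceptional divisors, combined with a careful $(-1)$- and $(-2)$-curve analysis on real rational surfaces, to exchange each infinitely near blowup for a distinct-point one.
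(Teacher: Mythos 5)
The paper itself offers no proof of this theorem: it is imported verbatim from \cite{hm1} (``The following results are proved in \cite{hm1}''), so there is nothing internal to compare your argument against; judged on its own, your sketch has genuine gaps in each part. In (1), the ``patching'' step is not a valid operation: by Definition~\ref{twist.defn} an algebraic map on $[-1,1]$ must be the restriction of a \emph{single} rational map regular on all of $[-1,1]$, and two rational maps built in different charts on overlapping subintervals cannot be glued. Since a single chart $t\mapsto\bigl(\tfrac{1-t^2}{1+t^2},\tfrac{2t}{1+t^2}\bigr)$ omits one rotation, it can never produce an approximation of a surjective $M_0$. The standard fix is the surjective parametrization $(g,h)\mapsto\bigl(\tfrac{h^2-g^2}{g^2+h^2},\tfrac{2gh}{g^2+h^2}\bigr)$ with $g,h$ polynomials having no common zero on $[-1,1]$, which realizes arbitrary winding and reduces the claim to Weierstrass approximation plus Hermite interpolation of a lift $(g_0,h_0)\colon[-1,1]\to\mathbb{R}^2\setminus\{0\}$, exactly in the spirit you intend. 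In (2), the existence of the intermediate points $r_i\in C_2(q_i)\cap C_1(p_i)$ for one \emph{common} pair of axes is not a genericity statement: two latitude circles belonging to different axial foliations are often disjoint (small circles near the respective poles, for instance), and the bad locus of axes is open, so ``a generic choice arranges all required intersections'' is unjustified; one must actually prove that the finitely many open conditions can be satisfied simultaneously, and this is where the substance of the lemma in \cite{hm1} lies.

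The plan for (3) would fail as written. If $r$ is infinitely near to $p$, the complexification of $B_rB_pS^2$ contains the strict transform of $E_p$, a $(-2)$-curve, whereas $B_{p_1,p_2}S^2$ for general distinct $p_1,p_2$ contains none; consequently no collection of disjoint $(-1)$-curves on $R$ can be biregularly contracted so as to exhibit $R$ as a blow-up of $S^2$ at distinct points, and no amount of $(-1)$/$(-2)$-curve bookkeeping will produce one. The content of (3) is that the two surfaces are isomorphic as \emph{real} algebraic surfaces in the sense used throughout this paper, i.e.\ via a birational map that is regular, with regular inverse, along the real loci --- precisely the category in which the degenerate Cremona involution $\sigma'$ of (\ref{cubic.inv.exmp}) and the twisting maps become isomorphisms. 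Separating infinitely near real points therefore requires the explicit constructions of \cite{hm1,hm2}, not a contraction argument. Finally, you never arrange that the images $q_i=\phi(r_i)$ are pairwise distinct points of $S^2$ lying off the exceptional locus, which is the form in which (3) is used in (\ref{good.support.sets}); this requires the transitivity statements of part (2) or a general-position choice of the blow-down structure, and is missing from your induction.
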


By adding more points in (\ref{hm.recall.thm}.3) and compactness,
we obtain the following stronger version:

\begin{cor}\label{good.support.sets}
Let $R$ be any real algebraic surface that is obtained from
$S^2$ by repeatedly blowing up $m$ real  
(possibly infinitely near) points and let
$r_1,\dots, r_n$ be points in $R$. 
There is a finite open cover $R=\cup_j W_j$ 
such that for every $j$ 
there are  distinct points
 $p_{1j},\dots, p_{mj}, q_{1j},\dots, q_{nj}\in S^2$ and an isomorphism
$\phi_j:R\to B_{p_{1j},\dots, p_{mj}}S^2$ such that $\phi_j(r_i)=q_{ij}$
and $\phi_j(W_j)\subset S^2\setminus\{p_{1j},\dots, p_{mj}\}$.
\qed
\end{cor}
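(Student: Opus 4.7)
The plan is to apply Theorem \ref{hm.recall.thm}.3 at each point of $R$, using an extra marked point to force the desired avoidance, and then invoke compactness. The only genuine issue is that all marked points entering Theorem \ref{hm.recall.thm}.3 must remain distinct, so the two cases $x\in\{r_1,\dots,r_n\}$ and $x\notin\{r_1,\dots,r_n\}$ need to be handled separately.

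Fix a point $x\in R$. If $x$ is not among the $r_i$, I apply Theorem \ref{hm.recall.thm}.3 to the augmented finite set $\{r_1,\dots,r_n,x\}$. This yields distinct points $p_1,\dots,p_m,q_1,\dots,q_n,q_{n+1}\in S^2$ and an isomorphism $\phi\colon R\to B_{p_1,\dots,p_m}S^2$ with $\phi(r_i)=q_i$ and $\phi(x)=q_{n+1}$. In particular $\phi(x)$ is distinct from each blow-up center $p_k$. Choose a Euclidean neighborhood $V_x$ of $q_{n+1}$ in $S^2\setminus\{p_1,\dots,p_m\}$, which is possible since the $p_k$ form a finite set; then $W_x:=\phi^{-1}(V_x)$ is an open neighborhood of $x$ satisfying $\phi(W_x)\subset S^2\setminus\{p_1,\dots,p_m\}$. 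If instead $x=r_i$ for some $i$, the same argument works without adding a new point: apply Theorem \ref{hm.recall.thm}.3 directly to $\{r_1,\dots,r_n\}$, and take $V_x$ to be a neighborhood of $q_i$ in $S^2\setminus\{p_1,\dots,p_m\}$.

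In either case the data $(\phi_x,W_x)$ satisfy the conclusion of the corollary for the single index $j$ corresponding to $x$. Doing this for every $x\in R$ produces an open cover $\{W_x\}_{x\in R}$ of $R$ together with isomorphisms $\phi_x$ of the required form. Since $R$ is compact, I extract a finite subcover $W_1,\dots,W_N$ with associated isomorphisms $\phi_1,\dots,\phi_N$; these yield the cover claimed in the statement, with points $p_{kj}:=p_k$ and $q_{ij}:=\phi_j(r_i)$ depending on the chart index $j$.

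There is no real obstacle here: the construction of $W_x$ is a one-line consequence of Theorem \ref{hm.recall.thm}.3 once the marked-point bookkeeping is set up, and compactness of $R$ does the rest. The only small point to verify is that adding one point to the list of marked points in Theorem \ref{hm.recall.thm}.3 is allowed, which is immediate from the statement (it permits any finite set of distinct points, and the blow-up set $\{p_1,\dots,p_m\}$ in the target is produced in the conclusion so can be arranged disjoint from the image of the extra point).
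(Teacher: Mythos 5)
Your argument is correct and is exactly the one the paper intends: the paper only says ``by adding more points in (\ref{hm.recall.thm}.3) and compactness,'' and you have filled in that sketch, including the minor bookkeeping of not duplicating a marked point when $x$ already equals some $r_i$. Nothing further is needed.
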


\begin{say}[Proof of (\ref{i.S2n.thm})] \label{pf.of.i.S2n.thm}
Let $p_1,\dots, p_n,q\in S^2$ be any finite set of distinct points, and 
let  $\phi \in \diff(S^2, p_1,\dots, p_n)$.
By (\ref{luk.say}) there are automorphisms 
$\psi_s \in \aut(S^2)$ such that 
$\psi_s$ converges to $\phi$.

For any $s$ and $i$, set $q_i^s:= \psi_s(p_i)$. 
As $\psi_s$ converges to $\phi$, the $q_i^s$ converge to $p_i$ for every $i$. 
By (\ref{hm.recall.thm}.2.b) there are
automorphisms $\Phi_s$ such that $\Phi_s(q_i^s)=p_i$
and  $\Phi_s$ converges to the identity.
Thus the composites $\Phi_s\circ \psi_s$ are in
$\aut(S^2, p_1,\dots, p_n)$ and they
converge to $\phi$.
\end{say}

\begin{prop}\label{homeo.identity.dense}
Let $R$ be any real algebraic surface that is obtained from
$S^2$ by repeatedly blowing up $g$ real  
(possibly infinitely near) points and let
$r_1,\dots, r_n$ be points in $R$. 
Then
the group $\aut_0(R, r_1,\dots, r_n)$ of 
algebraic automorphisms homotopic to identity
 is dense in  $\diff_0(R, r_1,\dots, r_n)$.
\end{prop}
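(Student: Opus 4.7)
The plan is to execute the scheme sketched in the introduction: fragment any $\phi \in \diff_0(R, r_1,\dots,r_n)$ into a composition of diffeomorphisms each supported in a single chart provided by (\ref{good.support.sets}), push each factor down to a diffeomorphism of $S^2$, approximate there using (\ref{i.S2n.thm}), and finally lift the approximations back to algebraic automorphisms of $R$.

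First I would apply (\ref{good.support.sets}) to obtain a finite open cover $R = \bigcup_j W_j$ together with isomorphisms $\phi_j : R \to B_{p_{1j},\dots,p_{mj}} S^2$ satisfying $\phi_j(r_k) = q_{kj}$ and $\phi_j(W_j) \subset S^2\setminus\{p_{1j},\dots,p_{mj}\}$. It then suffices to handle $\phi$ already $C^\infty$-close to the identity: a general element of $\diff_0(R, r_1,\dots,r_n)$ is reached from the identity by concatenating finitely many such small steps along an isotopy, and approximating each small step separately yields an approximation of the whole. For such a small $\phi$ a standard fragmentation argument produces a factorization $\phi = \psi_1 \circ \cdots \circ \psi_N$ in which each $\psi_i$ fixes the $r_k$, is isotopic to the identity relative to the $r_k$, and is supported in a single $W_{j(i)}$.

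Next, for each $i$ I transport: $\tilde\psi_i := \phi_{j(i)} \circ \psi_i \circ \phi_{j(i)}^{-1}$ is a diffeomorphism of $B_{p_{1j(i)},\dots,p_{mj(i)}} S^2$ supported in $\phi_{j(i)}(W_{j(i)})$, which avoids the exceptional divisor. It therefore descends to a diffeomorphism $\bar\psi_i$ of $S^2$ that is the identity on a neighborhood of the blow-up centers $p_{1j(i)},\dots,p_{mj(i)}$ and fixes the points $q_{1j(i)},\dots,q_{nj(i)}$. By (\ref{i.S2n.thm}) I can approximate $\bar\psi_i$ arbitrarily well in $C^\infty$ by an element $\bar\alpha_i \in \aut(S^2, p_{1j(i)},\dots,p_{mj(i)}, q_{1j(i)},\dots,q_{nj(i)})$. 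Since $\bar\alpha_i$ fixes every blow-up center, it lifts uniquely to an algebraic automorphism of $B_{p_{1j(i)},\dots,p_{mj(i)}} S^2$, and conjugating by $\phi_{j(i)}^{-1}$ yields $\alpha_i \in \aut(R, r_1,\dots,r_n)$ which is $C^\infty$-close to $\psi_i$.

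Finally, $\alpha := \alpha_1 \circ \cdots \circ \alpha_N$ lies in $\aut(R, r_1,\dots,r_n)$, and continuity of composition in the $C^\infty$ topology lets me make $\alpha$ arbitrarily close to $\phi$ by taking the individual approximations close enough; for a sufficiently tight approximation $\alpha$ then lies in the same connected component of $\diff(R, r_1,\dots,r_n)$ as $\phi$, namely the identity component, so $\alpha \in \aut_0(R, r_1,\dots,r_n)$. The main obstacle I expect is the fragmentation itself: one must arrange each factor to be supported in a single $W_{j(i)}$ \emph{while} remaining isotopic to the identity relative to $\{r_1,\dots,r_n\}$, and one must keep $N$ bounded in terms of the cover (not of $\phi$) so that the compounding of $C^\infty$-errors under composing $N$ factors stays controlled. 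A subsidiary check is that $\bar\psi_i$ is smooth on all of $S^2$, which holds because $\psi_i$ is the identity outside a compact subset of $W_{j(i)}$ and $\phi_{j(i)}$ restricts to an isomorphism on $W_{j(i)}$ onto its image.
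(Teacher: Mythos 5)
Your proposal is correct and follows essentially the same route as the paper's proof: fragment $\phi$ into factors supported in the charts $W_j$ of (\ref{good.support.sets}), descend each factor to $S^2$, approximate there by (\ref{i.S2n.thm}), and lift back through the blow-up. The only difference is that you spell out the fragmentation step (reduction to $\phi$ near the identity via the isotopy, control of the number of factors) that the paper compresses into the phrase ``by a partition of unity argument.''
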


\begin{proof} Let
$\phi \colon R\to R$ be a diffeomorphism fixing $r_1,\dots, r_n$, 
and homotopic to the identity. 
Choose $R=\cup_jW_j$ as in (\ref{good.support.sets}). 
By a partition of unity argument,
$\phi$ can be written as the composite of
diffeomorphisms $\phi_{\ell} \colon R\to R$ fixing $r_1,\dots, r_n$
 such that each
$\phi_{\ell}$ is the identity outside some
$W_j\subset R$. 

In particular, each $\phi_{\ell}$ descends to a
diffeomorphism $\phi'_{\ell}$ of $S^2$ which fixes the points
$p_{1j},\dots, p_{gj}$ and 
$q_{1j},\dots, q_{nj}$. By
(\ref{i.S2n.thm}), we can approximate
$\phi'_{\ell}$ by algebraic automorphisms 
$\Phi'_{\ell,s}$ fixing
all the points $p_{1j},\dots, p_{gj}$ and 
$q_{1j},\dots, q_{nj}$.
Since the $\Phi'_{\ell,s}$ fix $p_{1j},\dots, p_{gj}$,
they lift to 
algebraic automorphisms 
$\Phi_{\ell,s}$ of
$R\cong B_{p_{1j},\dots, p_{gj}}S^2$ fixing
 the points $r_1,\dots, r_n$.
The composite of the $\Phi_{\ell,s}$ then converges to $\phi$.
\end{proof}

\section{Generators of the mapping class group}\label{sec.mapping}

\begin{defn} Let $R$ be a compact, closed surface and
$p_1,\dots, p_n$ distinct points on $R$. 
The {\it mapping class group}
is the group of connected components of
those diffeomorphisms $\phi \colon R\to R$ such that $\phi(p_i)=p_i$
for $i=1,\dots, n$.
$$
{\mathcal M}(R, p_1,\dots, p_n):=\pi_0\bigl(\diff(R, p_1,\dots, p_n)\bigr).
$$
Up to isomorphism, 
this group depends only on the orientability and the genus of $R$.
The orientable case has been  intensely  studied.
Recent important results about the non-orientable case are in \cite{kor, wahl}.

(In the literature, $ {\mathcal M}_{g,n}$
is used to denote both the mapping class group
of an orientable genus $g$ (hence with Euler characteristic
$2-2g$) surface with $n$ marked points
and the mapping class group
of a non-orientable genus $g$ (hence with Euler characteristic
$2-g$) surface with $n$ marked points.)

\end{defn}

In preparation for the next section, we  establish a
somewhat new explicit set of generators in the
non-orientable case.

Write $R$ as
$B_{p_1,\dots,p_g}S^2$, the blow up of $S^2$ at $g$ points.
We start by describing some elements of 
the mapping class group. For more details
see \cite{lick, chi, kor}.

\begin{defn}[Dehn twist] Let $R$ be any surface and
$C\subset R$ a simple closed smooth curve such that
$R$ is orientable along $C$. Cut $R$ along $C$, rotate one side
around once completely and glue the pieces back together.
This defines a  diffeomorphism $t_C$ of $R$, see
Figure~\ref{fig.dehn.defn}.

\begin{figure}[ht]
    \centering
   \epsfysize=3.8cm
    \epsfbox{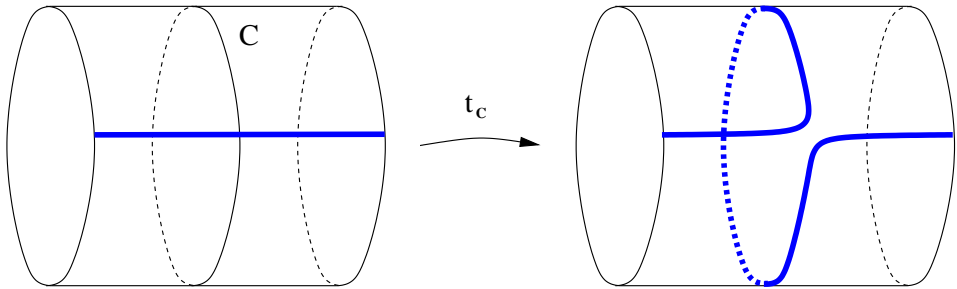} 
\caption{The effect of the Dehn twist around $C$ on a curve.}
    \label{fig.dehn.defn}
    
  \end{figure}
  
The inverse
$t^{-1}_C$ corresponds to rotating one side the other way.
Up to isotopy, the pair $\{t_C, t^{-1}_C\}$ does not depend on the 
choice of $C$ or the rotation.
 Either of $t_C$ and $ t^{-1}_C$ is called a {\it Dehn twist} using $C$.
On an oriented surface, with $C$ oriented, one can
make a sensible distinction
between $t_C$ and $ t^{-1}_C$. This is less useful in the
non-orientable case.
\end{defn}

\begin{defn}[Crosscap slide]\label{cc.slide.defn}
 Let $D$ be a closed disc
and $p,q\in D$ two points.  Take a simple closed curve $C$ in $D$ 
passing
through $p,q$ and let $C'$ denote the corresponding curve 
in $ B_qD$.
Let $M_p$ be a small disc around $p$.
Let $\{\phi_t:t\in [0,1]\}$ be a continuous
 family of diffeomorphisms of $B_qD$
such that $\phi_0$ is the identity, each $\phi_t$
 is  the identity near the boundary  and as $t$ increases, the $\phi_t$
slide   $M_p$  once around $C$. At $t=1$,  $M_p$ returns to itself
with its orientation reversed, as in
Figure~\ref{fig.cc.defn}. In particular, $\phi_1(p)=p$.
Thus $\phi_1$ can be lifted to a diffeomorphism of  $B_{p,q}D$ 
which is not isotopic to the identity but is the identity near the boundary.

\begin{figure}[ht]
    \centering
   \epsfysize=3cm
    \epsfbox{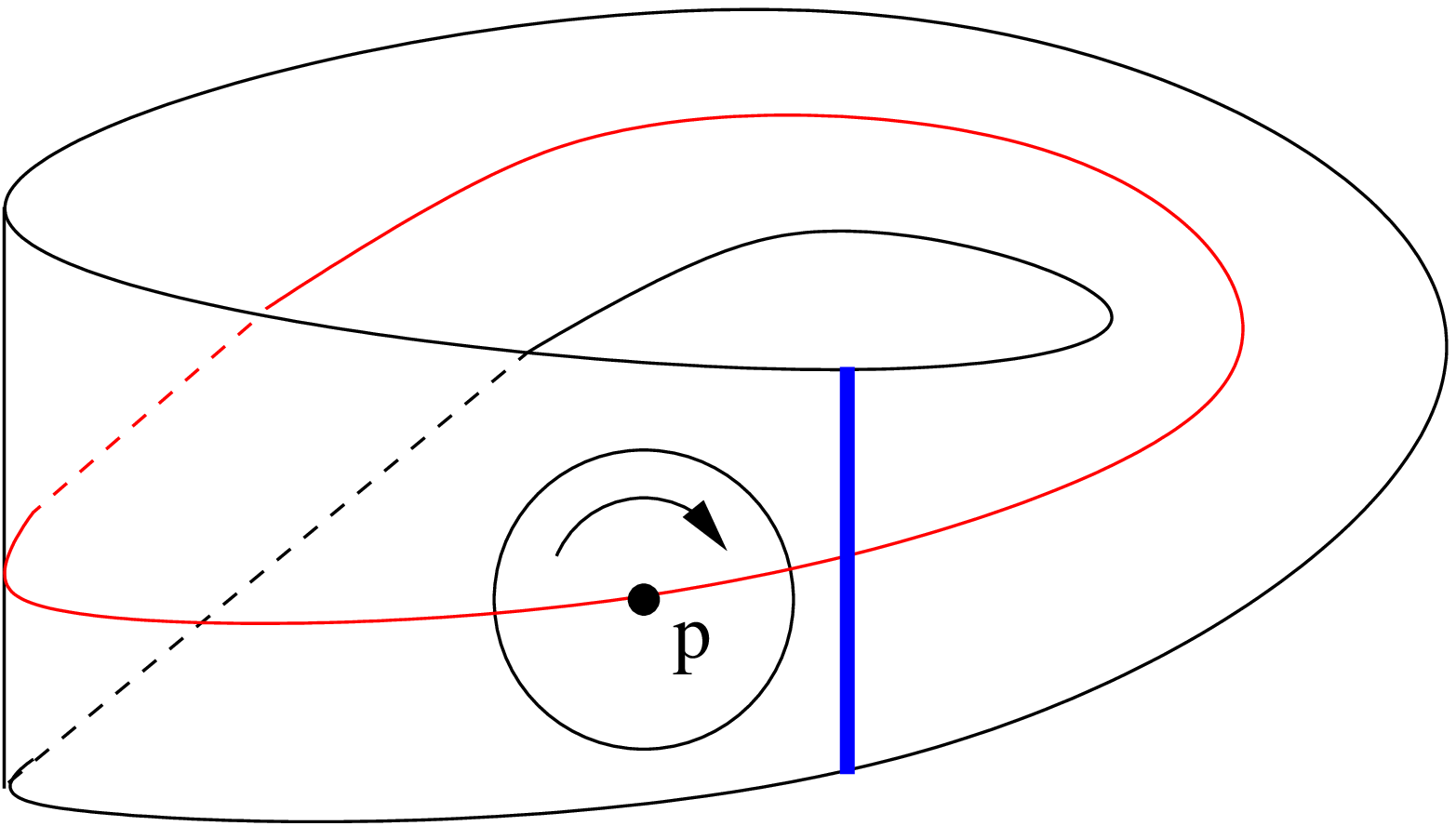}
\quad\raisebox{1.5cm}{ $\stackrel{\phi_1}{\longrightarrow}$}\qquad
 \epsfysize=3cm
    \epsfbox{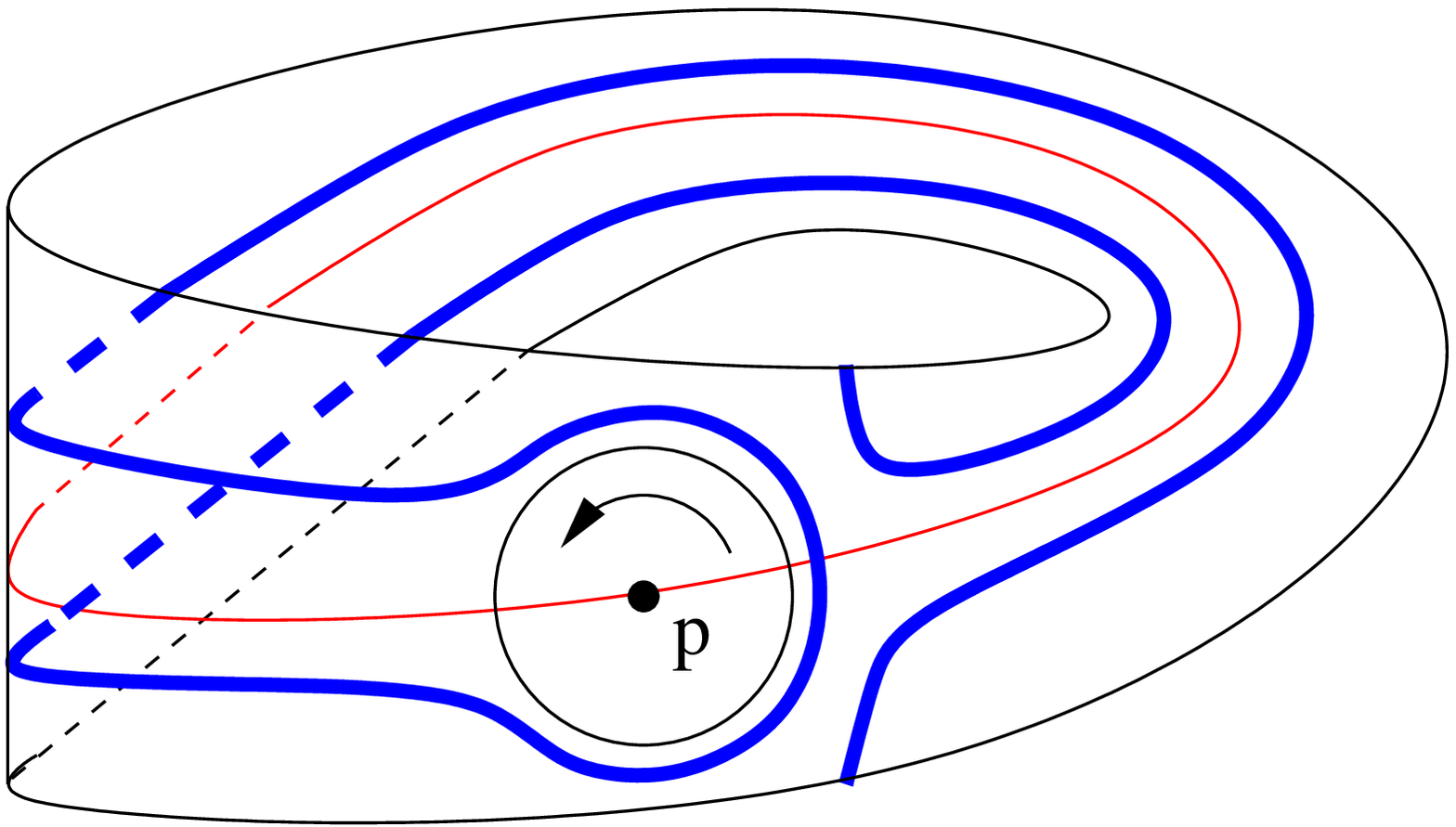}
\caption{Cross-cap slide.}
    \label{fig.cc.defn}
    
  \end{figure}
  
Let $R$ be any surface, $U\subset R$ a closed subset with $C^{\infty}$ boundary
and $\tau:U\to B_{p,q}D$ a  diffeomorphism.
Then $\tau^{-1}\phi_1\tau \colon U\to U$ is the identity near the boundary
of $U$, hence it can be extended by the identity
on $R\setminus U$ to a diffeomorphism of $R$.
 Up to isotopy, this diffeomorphism does not depend on the choice of $C$,
 $\phi_t$ and $\tau$. It is called a {\it cross-cap slide}
or a {\it $Y$-homeomorphism} using $U$. 
Note that for a cross-cap slide to exist, $R$ must be  non-orientable and of 
genus at least 2.
\end{defn}

\begin{say}[Generators of the mapping class group]\label{chill.gens}

Let $R_g$ be a non-orientable surface of genus $g\geq 1$.
We write $R_g:=B_{p_1,\dots,p_g}S^2$ with 
exceptional curves $E_i\subset R_g$
and let $\pi \colon R_g\to S^2$ be the blow down map.

The map $\pi$ gives  a one-to-one correspondence between
\begin{enumerate}
\item[$\bullet$] simple closed smooth curves $C_R\subset R_g$
 whose intersection with
any exceptional curve $E_i$ is transversal, and
\item[$\bullet$]  immersed curves $C=\pi(C_R)\subset S^2$
whose only self-intersections are at the $p_i$ and
no two branches are tangent.
\end{enumerate}

Generators of the mapping class group were first established by
\cite{lick} and simplified by \cite{chi}.
The case with marked points was settled by \cite{kor}.

The generators  are the following
\begin{enumerate}
\item Dehn twists along $C_R$ for certain smooth curves $C\subset S^2$ that
pass through an even number of the $p_1,\dots,p_g$.
(No self-intersections at the $p_i$.)
\item Cross-cap slides using a disc
$D\subset S^2$ that contains exactly 2 of the $p_1,\dots,p_g$.
\end{enumerate}

The results of \cite{chi} and of \cite{kor} are more precise
in that  only very few of these generators are used.
In the unmarked case, 
the above formulation is   established in the course of  the proof
and stated on \cite[p.427]{chi}.
\end{say}

We will need somewhat different generators.
We thank M.\  Korkmaz for answering many questions
and especially for pointing out that one should use 
 the lantern relation (\ref{lantern.defn}) to establish the
following. 

\begin{prop}\label{mapp.cl.our.gens}
 The following elements generate the mapping class group of
the marked surface
 $\bigl(B_{p_1,\dots,p_g}S^2, q_1,\dots, q_n)$.
\begin{enumerate}
\item Dehn twists along $C_R$ for certain smooth curves $C\subset S^2$ that
pass through $0,2$ or $4$ of the points $p_1,\dots,p_g$.
(No self-intersections at the $p_i$.)
\item Cross-cap slides using a disc
$D\subset S^2$ that contains exactly 2 of the points $p_1,\dots,p_g$.
\end{enumerate}
\end{prop}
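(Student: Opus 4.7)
The plan is to reduce the Chillingworth--Korkmaz generating set recalled in (\ref{chill.gens}) to the smaller set described in (\ref{mapp.cl.our.gens}). The crosscap slides in (\ref{chill.gens}) already coincide with item (2) above, so it suffices to show that for any smooth simple closed curve $C\subset S^2$ passing through $2k\ge 6$ of the $p_i$, the Dehn twist $t_{C_R}$ is a product of Dehn twists along $C'_R$ with $C'$ passing through strictly fewer of the $p_i$. A descending induction on $2k$ then reduces every Chillingworth--Korkmaz Dehn twist generator to products of Dehn twists along curves through $0$, $2$, or $4$ of the $p_i$.

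For the inductive step I apply the lantern relation in a suitably placed, oriented $4$-holed sphere $\Sigma$ inside $R:=B_{p_1,\dots,p_g}S^2$. Partition the $2k$ base points of $C$ as $A_1\sqcup A_2\sqcup A_3$ with $|A_1|=|A_2|=2$ and $|A_3|=2k-4$, and for each $j=1,2,3$ choose a smooth simple closed curve $\delta_j\subset S^2$ passing through exactly the points of $A_j$, transverse to $C$ at each point of $A_j$, and disjoint in $S^2$ from the other $\delta_{j'}$. Transversality guarantees that at each $p\in A_j$, the lifts $C_R$ and $\delta_{j,R}$ hit the exceptional curve $E_p$ at distinct points, so $C_R,\delta_{1,R},\delta_{2,R},\delta_{3,R}$ are pairwise disjoint in $R$. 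Arranging all three $\delta_j$ on one side of $C$ in $S^2$ and enlarging $\delta_3$ to enclose any $p_i$ with $i>2k$ that lies on that side, the four curves cobound an oriented $4$-holed sphere $\Sigma\subset R$: the only pieces of the exceptional divisor lying inside $\Sigma$ are the open arcs $E_p\cap\Sigma$ for $p\in A_1\cup A_2\cup A_3$, which (being arcs) have trivial normal bundle in $\Sigma$ and hence do not spoil orientability. The three interior lantern curves $\gamma_{12},\gamma_{13},\gamma_{23}\subset\Sigma$ are forced to cross exactly those arcs $E_p\cap\Sigma$ with $p\in A_i\cup A_j$, so each $\gamma_{ij}$ projects to a simple closed curve in $S^2$ through the $|A_i|+|A_j|$ points of $A_i\cup A_j$---that is, through $4$, $2k-2$, and $2k-2$ base points respectively. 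The lantern relation in $\Sigma$,
$$
t_{\delta_{1,R}}\,t_{\delta_{2,R}}\,t_{\delta_{3,R}}\,t_{C_R}\;=\;t_{\gamma_{12}}\,t_{\gamma_{13}}\,t_{\gamma_{23}},
$$
then exhibits $t_{C_R}$ as a product of Dehn twists along curves passing through $2,2,2k-4,4,2k-2,2k-2$ base points---all strictly less than $2k$---closing the induction.

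The main obstacle is the detailed verification that $\Sigma$ really is an oriented $4$-holed sphere, since $C$ and each $\delta_j$ share the points of $A_j$ and their lifts interact nontrivially inside the M\"obius band neighborhoods $M_p$ of the blown-up points. Careful bookkeeping of how the arcs $E_p\cap\Sigma$ glue along the four boundary components is needed to confirm that $\Sigma$ has genus $0$, four boundary circles, and no crosscaps in its interior. The marked points $q_1,\dots,q_n$ are handled by keeping all auxiliary curves $\delta_j$ and $\gamma_{ij}$ disjoint from them via small isotopies, which is possible since the $q_i$ are finitely many.
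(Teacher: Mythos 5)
Your argument is essentially the paper's own proof: starting from the Chillingworth--Korkmaz generators you apply the lantern relation in a four-holed sphere whose boundary consists of $C_R$ together with three curves through $2$, $2$ and $2k-4$ of the base points, which yields exactly the same six replacement twists (through $2,2,2k-4,4,2k-2,2k-2$ points) that the paper reads off from Figure~\ref{fig.lantern}. The one detail to tighten is that the marked points $q_i$ and the remaining blown-up points must lie \emph{outside} the four-holed sphere, not merely off the curves (otherwise the lantern relation fails in the marked mapping class group), but your device of enlarging the $\delta_3$-hole to absorb them already provides the fix.
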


Proof. We have included all the cross-cap slides from
(\ref{chill.gens}). Thus we need to deal with 
Dehn twists along $C_R$ where 
 $C\subset S^2$ is a simple closed  curve
passing through $m$ of the points $p_1,\dots, p_g$ with $m>4$.

 Using induction, it is enough to show that
the Dehn twist along $C_R$ can be written as the product
of Dehn twists along curves $C'_R$ where each $C'\subset S^2$ 
is a simple closed  curve
passing through fewer than $m$ of the points $p_1,\dots, p_g$.

Assume for simplicity that $C$ passes through 
$p_1,\dots, p_m$ with $m>4$ (and even).
For $I\subset \{1,\dots, m\}$
let $t_I$ be a Dehn twist using a simple closed curve $C_I$
passing through the $\{p_i:i\in I\}$ but none of the others.
The precise choice of the curve will be made later. 
We show that, with a suitable choice of the curves,
$t_{12345...m}$ is a product of the Dehn twists
$t_{125\dots m}, t_{345\dots m}, t_{1234}, t_{5\dots m}, t_{12}, t_{34}$.

This is best shown by a picture for $m=8$.
  In Figure~\ref{fig.lantern},
$t_{12345678}$ is a product of the Dehn twists
$t_{125678}, t_{345678}, t_{1234}, t_{5678}, t_{12}, t_{34}$. 
The shaded region is a sphere with four holes, and corresponds 
to a neighborhood of the lift to $R_8$ of $C_{12345678}$. 
On each side of the picture are drawn the curves corresponding to the
 Dehn twists of the same side in (\ref{lantern.defn}.1): 

a) $C_{12}$, $C_{34}$, $C_{5678}$, $C_{12345678}$,
 b) $C_{1234}$, $C_{125678}$, $C_{345678}$. \qed

 \begin{figure}[ht]
    \centering
     \epsfysize=4cm
    \epsfbox{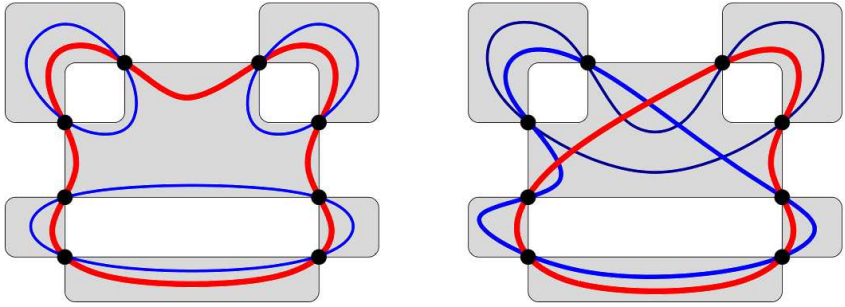}
    \caption{Lantern relation for $m=8$.}\label{fig.lantern}
     
  \end{figure}

\begin{say}[Lantern relation of Dehn]\label{lantern.defn}\cite{dehn,johnson}
 Fix 4 points 
$q_0,\dots, q_3\in S^2$. Let $t_i$ be the 
Dehn twist using a small circle around $q_i$ and for $i,j \in \{1,2,3\}$, let
$t_{ij}$  be the 
Dehn twist using a simple closed curve that separates
 $q_i, q_j$ from the other 2 points. Then, with suitable
orientations,
$$
t_0t_1t_2t_3 = t_{12}t_{13}t_{23}\;,
\eqno{(\ref{lantern.defn}.1)}
$$
where the equality is understood to hold in
${\mathcal M}(S^2, q_0,\dots, q_3)$.
\end{say}

\section{Automorphisms and the mapping class group}
\label{mapping.cl.proofs}

The main result of this section is the following.

\begin{thm} \label{map.class.gen.onto}
 Let $R$ be a real algebraic surface that is obtained from $S^2$
by blowing up points
and $p_1,\dots, p_n\in R$ distinct marked points. Then
the natural map
$$
\aut(R, p_1,\dots, p_n)\onto {\mathcal M}(R, p_1,\dots, p_n)
\qtq{is surjective.}
$$
\end{thm}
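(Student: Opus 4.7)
The plan is to combine Proposition \ref{mapp.cl.our.gens}, which supplies a convenient set of generators for the mapping class group, with the realization of each such generator by an algebraic automorphism coming from a Cremona transformation. This is essentially the marked-point refinement of Theorem \ref{i.mpc.thm}.

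By Proposition \ref{mapp.cl.our.gens}, it suffices to exhibit algebraic representatives in $\aut(R,p_1,\dots,p_n)$ for two families of mapping classes: Dehn twists along lifts $C_R$ of simple closed curves $C\subset S^2$ passing through $0$, $2$, or $4$ of the blown-up points, and cross-cap slides using discs $D\subset S^2$ containing exactly $2$ of the blown-up points. For each Dehn-twist generator whose curve passes through $k\in\{0,2,4\}$ of the blown-up points, I would use a Cremona transformation with $k$ real and $4-k$ imaginary (conjugate pair) base points, positioning the real base points at the appropriate blown-up points on $C$. The description following Figure \ref{fig.cremona} identifies such a Cremona transformation with a birational isomorphism $R\map B_{q_1,\dots,q_g}S^2$; composing with an automorphism in $\aut(S^2)$ that sends $(q_i)$ back to $(p_i)$---which exists by the $n$-transitivity of $\aut(S^2)$ established in \cite{hm1}---yields a genuine algebraic automorphism of $R$. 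A cross-cap slide is handled analogously by a $\sigma_{abcdef}$-type Cremona transformation with two real base points at the blown-up points inside $D$ together with a pair of infinitely near base points, which encodes the orientation-reversing half-twist intrinsic to the slide.

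To enforce the constraint that the marked points $p_1,\dots,p_n$ are fixed, each of the above automorphisms can be post-composed with an element of $\aut_0(R)$ that is isotopic to the identity and sends the displaced image of $(p_1,\dots,p_n)$ back to its original position. Such correcting elements exist by Proposition \ref{homeo.identity.dense}, and because they are isotopic to the identity they do not alter the mapping class of the composite.

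The main obstacle is the topological verification that the isotopy class of each constructed automorphism actually equals the prescribed Dehn twist or cross-cap slide, rather than some product involving additional generators. This requires tracing the action of the Cremona involution on the birational transforms of the exceptional curves and on a tubular neighborhood of $C_R$ (or of the boundary of $D$), and matching it up to isotopy with the local model of a full Dehn twist or cross-cap slide. Once this identification is carried out for each of the three Dehn-twist types and the cross-cap case, the generators of Proposition \ref{mapp.cl.our.gens} all lie in the image of $\aut(R,p_1,\dots,p_n)$, and surjectivity of the natural map to $\mathcal{M}(R,p_1,\dots,p_n)$ follows.
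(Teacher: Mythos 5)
Your overall strategy --- take the generators from (\ref{mapp.cl.our.gens}) and realize each one by a Cremona transformation --- is the same as the paper's, but the step you defer as ``the main obstacle'' is not a technical verification to be added at the end: it \emph{is} the proof. For the $2$- and $4$-point Dehn twists the paper's argument consists precisely of composing the Dehn twist along $C_R$ with the (inverse) Cremona transformation and checking, by explicitly tracking the images of the exceptional curves $E_1,\dots,E_4$ through the sequence of Figures \ref{fig.cremona}--\ref{fig.dehn-g4d}, that the composite sends each $E_i$ to itself up to isotopy; it is therefore lifted from a diffeomorphism of the pointed sphere, which (\ref{i.S2n.thm}) then replaces by an algebraic automorphism. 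Without that isotopy computation you have produced an algebraic automorphism of $R$ but know nothing about its mapping class, so the argument does not close.

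Beyond this, two of your specific choices diverge from what actually works. First, your correction scheme is too weak: composing the Cremona transformation with an automorphism of $S^2$ returning the base points yields the Dehn twist only up to a possibly \emph{nontrivial} mapping class lifted from the pointed sphere, and post-composing with elements of $\aut_0(R)$ isotopic to the identity (your appeal to (\ref{homeo.identity.dense})) cannot remove such a discrepancy, since it does not change the mapping class at all; the residual class must instead be realized algebraically via (\ref{i.S2n.thm}). Second, for a curve through none of the $p_i$ the paper does not use a $\tau_{abcdef}$ with four imaginary base points --- a single such transformation is not a Dehn twist --- but an explicit twisting map supported in an annulus around $C$ (\ref{dehn.cb.conds}); and for the cross-cap slide the paper uses two real base points together with a conjugate imaginary pair $q,\bar q$, exactly as in (\ref{2pt.case.say}), not real infinitely near base points, whose action on $E_1,E_2$ you would still have to identify with the local model of the slide.
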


Proof. We prove that all the generators
of the mapping class group listed in (\ref{mapp.cl.our.gens})
 can be realized algebraically.
There are 4 cases to consider:
\begin{enumerate}
\item Dehn twists along $C_R\subset R$ for  smooth curves $C\subset S^2$ that
pass through either 
\begin{enumerate}
\item none of the points $p_i$,
\item exactly 2  of the points $p_i$, or
\item exactly 4  of the points $p_i$.
\end{enumerate}
\item Cross-cap slides using a disc
$D\subset S^2$ that contains exactly 2 of the points $p_i$.
\end{enumerate}

We start with the easiest case (\ref{map.class.gen.onto}.1.a).

\begin{say}[Algebraic realization of Dehn twists]\label{dehn.cb.conds}
Let  $C\subset S^2$ be a smooth curve passing through 
 none of the points $p_i$. After applying a  suitable
automorphism of $S^2$, we may assume that $C$ is the big circle
$(x=0)$.

Consider the map $g \colon [-1,1] \to O(2)$ where $g(t)={\mathbf 1}$ for
$t \in [-1,-\epsilon]\cup[\epsilon,1]$ and $g(t)$ is the rotation by 
angle $\pi(1 + t/\epsilon)$ for $t \in [-\epsilon,\epsilon]$. 
Let  $M \colon [-1,1] \to O(2)$ be an algebraic approximation
of $g$ such that the corresponding twisting (\ref{twist.defn})
$\Phi_M$ is the identity at the points $p_i$.
Then $\Phi_M$ is an algebraic realization of the Dehn twist
around $C$.

On the torus, the same argument works for
either of the $S^1$-factors. Up to isotopy and the
natural $GL(2,\z)$-action, this takes care of all
simple closed curves.
\end{say}

Next we deal with the hardest case (\ref{map.class.gen.onto}.1.c).

\begin{say}[4 pt case] After applying a  suitable
automorphism of $S^2$, we may assume that $C$ is close to a circle
in $S^2$ but  the 4 points do not lie on a circle. 

Let us take an annular neighborhood of $C$ and blow up the 4 points
$p_1,\dots, p_4$. The resulting open surface is denoted
by $W\subset B_{p_1,\dots, p_4}S^2$. It contains the curve $C_R$ and the
4 exceptional curves $E_1,\dots, E_4$. 

If we
cut the blown-up annulus $W$ along the 5 curves
$A_1,\dots,A_4,D$ as indicated of the left hand side
of  Figure~\ref{fig.annulus2},
we get the contractible surface  $U$ indicated on the right hand side
of  Figure~\ref{fig.annulus2}.
The left and right hand sides of $U$ are identified to form a cylinder,
giving a neighborhood of the curve $C_R\subset B_{p_1,\dots, p_4}S^2$.
The big rectangle with lighter shading in $U$ on the right
corresponds to the lighter shaded are in $W$ on the left.
The 4 top and 4 bottom line segments of $U$ are identified to form
4 M\"obius bands.

 \begin{figure}[ht]
    \centering
     \epsfysize=4.2cm
    \epsfbox{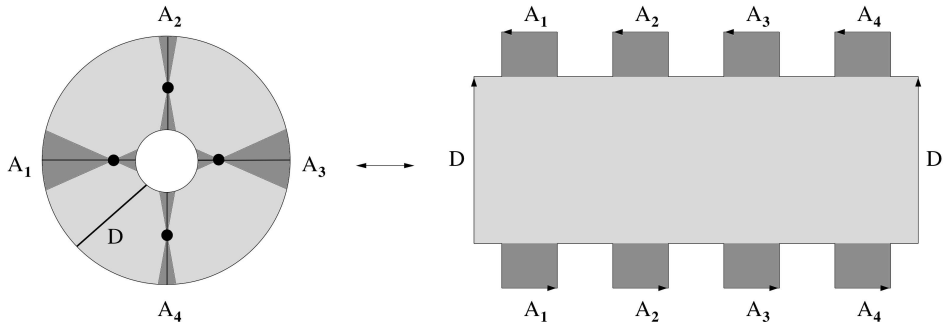}
    \caption{Two models of the annulus blown up in 4 points.}\label{fig.annulus2}
     
  \end{figure}

Next, in Figure~\ref{fig.dehn-k1} we show the 4 exceptional curves.

 \begin{figure}[ht]
    \centering
     \epsfysize=4cm
    \epsfbox{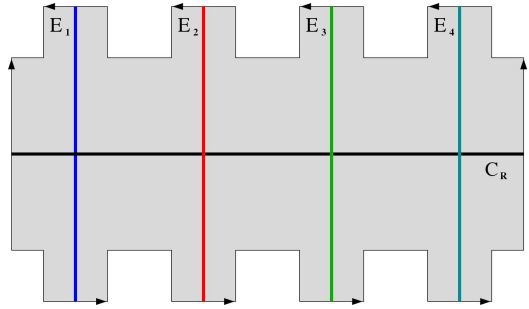}
    \caption{The 4 exceptional curves.}\label{fig.dehn-k1}
     
  \end{figure}

Figure~\ref{fig.dehn-k2}  shows the images of the curves $E_i$
after the Dehn twist around $C_R$. 

 \begin{figure}[ht]
    \centering
     \epsfysize=4cm
    \epsfbox{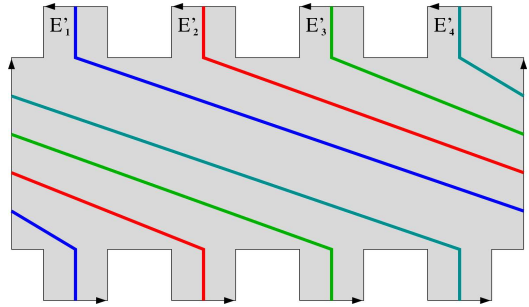}
    \caption{Effect of the Dehn twist around $C_R$.}\label{fig.dehn-k2}
     
  \end{figure}

These images can be deformed to obtain a configuration 
as in Figure~\ref{fig.dehn-k3}. 
Note that now $E_i$ intersects $E'_j$ iff $i\neq j$.

 \begin{figure}[ht]
    \centering
     \epsfysize=4cm
    \epsfbox{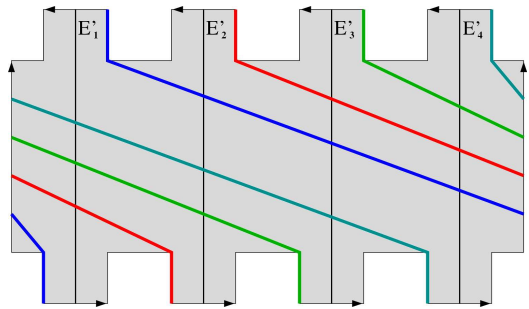}
    \caption{Deformation of Figure~\ref{fig.dehn-k2}.}\label{fig.dehn-k3}
     
  \end{figure}

Next we convert this back to the annulus model $W$
on the left hand side
of  Figure~\ref{fig.annulus2}. 

We obtain Figure~\ref{fig.dehn-g4d}.

 \begin{figure}[ht]
    \centering
      \epsfysize=6cm
    \epsfbox{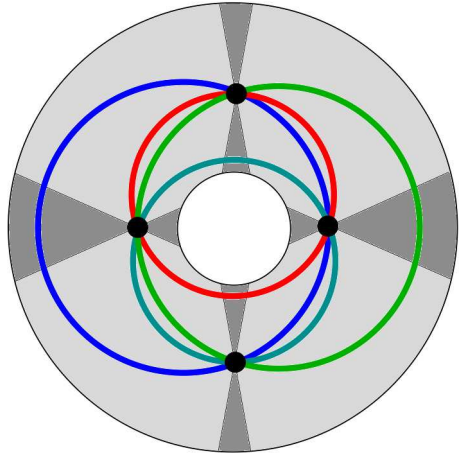} 
  \caption{Images of the four exceptional curves.}\label{fig.dehn-g4d}
  
       \end{figure}

The images of the exceptional curves $E_1,\dots, E_4$
under the standard Cremona transformation
with base points $p_1,\dots, p_4$ are shown in Figure~\ref{fig.cremona}. 

We see by direct inspection that the two quartets of curves
in Figures~\ref{fig.cremona} and \ref{fig.dehn-g4d} are isotopic.
Thus, if we first apply the Dehn twist and
then the (inverse) Cremona transformation and a suitable isotopy, 
we get a diffeomorphism
$\phi \colon R_n\to R_n$ such that $\phi(E_i)=E_i$. 
That is, $\phi$ is lifted from a diffeomorphism
of the g-pointed sphere $(S^2, p_1,\dots, p_n)$.
By (\ref{i.S2n.thm}), any such diffeomorphism is isotopic 
to an algebraic automorphism. Hence the 
Dehn twist along $C_R$ is also algebraic.
\end{say}

\begin{say}[2 pt case]\label{2pt.case.say}
 The proof is the same as in the 4 point case
but the description is easier.

A neighborhood of $C$ gives an annulus with 2 blown-up points.
After the Dehn twist we get two curves $E'_1, E'_2$ as in 
Figure~\ref{fig.cremona-2pts}.
\begin{figure}[ht]
   \epsfysize=3.5cm
    \epsfbox{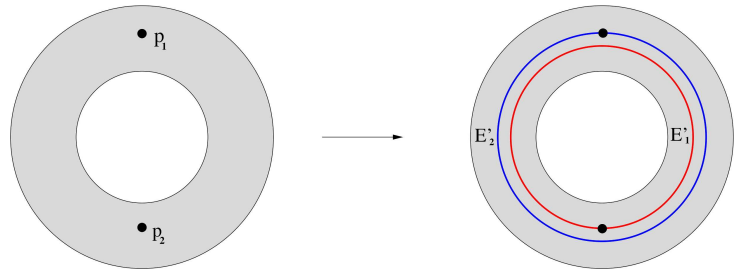}
 \caption{Cremona transformation with 2 real base points.}
        \label{fig.cremona-2pts}
        
      \end{figure}

We can assume that the two curves $E'_1, E'_2$ are close to being 
circles, that is, close to the intersections
 $S^2\cap H_i$ for some planes for $i=1,2$.
 Let $q,\bar q$ be the 2 (complex conjugate)
points where these 2 planes $H_i$ intersect the complexified sphere $Q$.
Then the Cremona transformation with base points
$p_1, p_2, q,\bar q$ is the inverse of the Dehn twist,
again up to  a diffeomorphism
of   $S^2$.
\end{say}

\begin{say}[Crosscap slides] Here the topological picture is
given by Figure~\ref{fig.crosscap}.
\begin{figure}[ht]
   \epsfysize=3.5cm
    \epsfbox{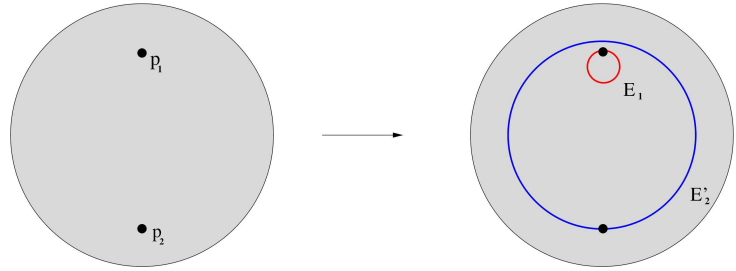}
 \caption{Cross-cap slides.}
        \label{fig.crosscap}
      \end{figure}
Note that $E_1$ is mapped to itself and $E_2$ is mapped 
to the (almost) circle $E'_2$. Up to isotopy, we can replace
$E_1$ with a small circle $E'_1$ passing through $p_1$. 

As in (\ref{2pt.case.say}), we obtain 
 $q,\bar q$ such that
 the Cremona transformation with base points
$p_1, p_2, q,\bar q$ is the inverse of the Dehn twist,
 up to  a diffeomorphism
of $S^2$.
\end{say}

\begin{say}[Proof of (\ref{homeo.gen.dense})]
Let $\phi:(R, q_1,\dots, q_n)\to  (R, q_1,\dots, q_n)$
be any diffeomorphism.
By (\ref{homeo.identity.dense}), there is an automorphism
$\Phi_1\in \aut(R, q_1,\dots, q_n)$ such that
$\Phi_1^{-1}\circ \phi$ is homotopic to the identity.

By (\ref{map.class.gen.onto}), we can approximate $\Phi_1^{-1}\circ \phi$ 
by a sequence of automorphisms $\Psi_s\in \aut(R, q_1,\dots, q_n)$.
Thus $\Phi_1\circ \Psi_s\in \aut(R, q_1,\dots, q_n)$
converges to $\phi$.
\qed
\end{say}


\end{document}